\pgfplotsset{compat=1.18}
\def\build#1_#2^#3{\mathrel{\mathop{\kern 0pt#1}\limits_{#2}^{#3}}}
\numberwithin{equation}{section}
\def\@subjclass{{\bfseries 2020 Mathematics Subject Classification. }}
\newtheorem{theorem}{Theorem}[section]
\newtheorem{proposition}[theorem]{Proposition}
\newtheorem{lemma}[theorem]{Lemma}
\newtheorem{remark}[theorem]{Remark}
\newtheorem{example}[theorem]{Example}
\newtheorem{definition}[theorem]{Definition}
\newtheorem{assumption}[theorem]{Assumption}
\theoremstyle{plain}
\begin{document}
\title{Spectral Theory and Almost Periodic Structures in Hom--Lie Banach Algebras}
\author{Marwa Ennaceur}
 \address{ Department of Mathematics, College of Science, University of Ha'il, Hail 81451, Saudi Arabia}
\email{\tt mar.ennaceur@uoh.edu.sa}
\subjclass[2020]{Primary 46H70;
Secondary 47B47, 43A60, 17D99.
}

\keywords{
Hom--Lie algebras, Hom--Malcev algebras, Banach algebras, twisted derivations, almost periodic functions, spectral decomposition, Bohr spectrum, ergodic decomposition, non-associative spectral theory}.

\begin{abstract}
We develop a systematic functional-analytic framework for \emph{Hom--Lie Banach algebras}, introducing bounded $\alpha$-twisted derivations and almost periodic elements. Under natural continuity and compactness assumptions, we establish a complete Bohr--Fourier spectral decomposition of such derivations. We prove that the associated almost periodic and ergodic subspaces are not merely topological complements but closed, $\alpha$-invariant \emph{subalgebras}, stable under the twisted Lie bracket—a key structural novelty that enables coherent restriction of the dynamics. We provide explicit constructions of \emph{Hom--Banach--Malcev algebras} and demonstrate our theory with concrete operator-algebraic applications, including a novel twisted Weyl algebra example, analyzed via the metaplectic representation, where a non-commuting twist enriches the Bohr spectrum from a cyclic group to a two-dimensional lattice.
\end{abstract}
\maketitle
\tableofcontents

\section{Introduction}

The theory of Hom--Lie algebras, initially introduced by Hartwig, Larsson, and Silvestrov~\cite{HLS2006} in the context of $\sigma$-deformations of Witt and Virasoro algebras, has since evolved into a vibrant branch of non-associative algebra. Extensions to Hom--Malcev algebras~\cite{Yau2011} and Hom--Jordan structures have further broadened its scope. However, most existing work remains confined to purely algebraic or finite-dimensional settings, often lacking the analytical tools needed to address dynamical or spectral questions.

In contrast, this paper initiates a systematic study of \emph{Hom--Lie Banach algebras}—infinite-dimensional, topologically complete non-associative algebras equipped with a bounded twisting map $\alpha$ and a twisted Lie bracket $[a,b]_\alpha = \alpha(ab - ba)$. Our framework integrates three major strands of modern mathematics: (i) the algebraic flexibility of Hom-type structures~\cite{MS2008,Yau2011}, (ii) the functional-analytic machinery of Banach algebras and $C^*$-dynamics~\cite{Murphy1990}, and (iii) the harmonic-analytic theory of almost periodic functions in Banach spaces~\cite{Bohr1947,Namioka1958}.

Our main contributions are as follows:\begin{itemize}
\item \textbf{Spectral decomposition of twisted derivations.} 
    We establish a full Bohr--Fourier spectral decomposition for bounded inner $\alpha$-twisted derivations $\delta = \operatorname{ad}_\alpha(X)$, under the natural assumption that the induced $C_0$-group $T(t) = e^{t\delta}$ has relatively compact orbits. This extends the classical spectral theory of almost periodic representations (see Namioka~\cite{Namioka1958}) to the non-associative Hom--Lie context. Crucially, we demonstrate that the twisting map $\alpha$ can fundamentally alter the Bohr spectrum, as shown in Section~\ref{subsec:weyl_heuristic} where the spectrum transitions from a cyclic group to a two-dimensional lattice.

    \item \textbf{Algebraic stability of invariant subspaces.} 
    We prove that the ergodic decomposition $A = A_{\mathrm{ap}}(\delta) \oplus A_{\mathrm{erg}}(\delta)$ is not only topological but also \emph{algebraically compatible}: both subspaces are closed, $\alpha$-invariant, and stable under the twisted bracket $[\cdot,\cdot]_\alpha$ (Theorem~\ref{thm:AP-ergodic-decomposition}). This structural rigidity—ensuring that the non-associative algebraic structure is preserved under the decomposition—is a key novelty absent from the classical theory.

    \item \textbf{Explicit constructions of Hom--Banach--Malcev algebras.} 
    We provide the first functional-analytic constructions of \emph{Hom--Banach--Malcev algebras} via bounded operators and strongly continuous semigroups (Theorems~\ref{thm:HM-identity}--\ref{thm:almostperiodic}), including a non-trivial example where the twisting map is not an algebra morphism.

    \item \textbf{Concrete operator-algebraic applications.} 
    We illustrate the theory with three classes of examples: (i) bilateral shift-induced Hom--Lie structures, (ii) Hom--Lie brackets on UHF $C^*$-algebras, and (iii) a twisted Weyl algebra where the non-commuting twist generates a rich $\mathbb{Z}^2$ Bohr spectrum. These are not mere formal analogues—they satisfy all hypotheses of our spectral theorems and yield explicit, non-trivial Bohr spectra (Section~\ref{sec:applications}).\end{itemize}
Our approach differs fundamentally from recent works such as El Kinani--Ben Ali~\cite{ElKinani2021}. By working in a \emph{non-commutative, non-associative, and non-compact} setting, we recover strong spectral results through the almost periodicity of orbits—a condition weaker than compact group actions but sufficient for Bohr decomposition.

All results are proved under minimal assumptions: boundedness of $\alpha$ and $\delta$, strong continuity of the associated group, and relative compactness of orbits. No finite-dimensionality, separability, or Hilbert-space structure is required.

In summary, this paper establishes Hom--Lie Banach algebras as a robust and natural framework for non-associative spectral theory, with clear connections to deformation quantization and quantum ergodicity. The analysis of non-commuting twists (Section~\ref{subsec:weyl_heuristic}), the categorical treatment of morphisms (Section~\ref{sec:morphisms}), the scalability of the numerical verification (Section~\ref{sec:numerics}), and the discussion of extensions to unbounded operators and Besicovitch spaces (Section~\ref{sec:limitations}) underscore the depth and applicability of our approach.
\section{Preliminaries}
\label{sec:prelim}

\subsection{Banach algebras and derivations}
Let $\mathcal{A}$ be a complex Banach algebra with associative product denoted by juxtaposition
$(a,b) \mapsto ab$ and norm $\|\cdot\|$.  
A bounded linear map $D:\mathcal{A}\to \mathcal{A}$ is called a \emph{derivation} if 
\[
D(ab)=D(a)b + a D(b) \quad \text{for all } a,b\in \mathcal{A}.
\]

\subsection{Hom--Lie algebras and twisted brackets}

\begin{definition}
A \emph{Hom--Lie algebra} $(\mathcal{L},[\cdot,\cdot]_\alpha,\alpha)$ 
consists of a complex vector space $\mathcal{L}$, a linear map 
$\alpha:\mathcal{L}\to \mathcal{L}$ (the \emph{twisting map}), and a bilinear bracket 
$[\cdot,\cdot]_\alpha:\mathcal{L}\times \mathcal{L}\to \mathcal{L}$ satisfying:
\begin{enumerate}
    \item $[x,y]_\alpha = -[y,x]_\alpha$ (skew-symmetry);
    \item The Hom--Jacobi identity:
    \[
      [\alpha(x),[y,z]_\alpha]_\alpha
      + [\alpha(y),[z,x]_\alpha]_\alpha
      + [\alpha(z),[x,y]_\alpha]_\alpha = 0
    \]
    for all $x,y,z\in \mathcal{L}$.
\end{enumerate}
When $\mathcal{L}$ is a Banach space and both $\alpha$ and $[\cdot,\cdot]_\alpha$ are continuous, we call $(\mathcal{L},[\cdot,\cdot]_\alpha,\alpha)$ a \emph{Hom--Lie Banach algebra}.
\end{definition}

\begin{remark}
In all applications considered in this paper, the Hom--Lie bracket arises from the associative product of a Banach algebra $\mathcal{A}$ via the canonical commutator:
\[
[a,b] := ab - ba, \qquad [a,b]_\alpha := \alpha([a,b]) = \alpha(ab - ba), \quad a,b \in \mathcal{A}.
\]
This construction ensures compatibility between the algebraic and topological structures. 
In this setting, inner $\alpha$-twisted derivations are given by
\[
\mathrm{ad}_\alpha(X)(a) = [X,a]_\alpha = \alpha(Xa - aX), \qquad X,a \in \mathcal{A}.
\]
This convention is assumed throughout Sections~\ref{sec:spectral}--\ref{sec:applications}.
\end{remark}

\subsection{Twisted derivations}

\begin{definition}
Let $(\mathcal{A},\cdot)$ be a Banach algebra and $\alpha:\mathcal{A}\to\mathcal{A}$ a bounded linear map.
An \emph{$\alpha$-twisted derivation} is a bounded linear operator $D:\mathcal{A}\to\mathcal{A}$ satisfying
\[
D(ab) = D(a)\,\alpha(b) + \alpha(a)\,D(b)
\quad\text{for all } a,b\in\mathcal{A}.
\]
If there exists $X\in\mathcal{A}$ such that 
\[
D(a) = [X,a]_\alpha \quad\text{for all } a\in\mathcal{A},
\]
where $[\cdot,\cdot]_\alpha$ is the Hom--Lie bracket defined above, then $D$ is said to be \emph{inner},
and we write $D = \mathrm{ad}_\alpha(X)$.
\end{definition}
\section{Hom--Banach--Malcev algebras}
\label{sec:hom-malcev}

\begin{definition}
A \emph{Hom--Banach--Malcev algebra} $(\mathcal{M},[\cdot,\cdot]_\alpha,\alpha)$ 
is a complex Banach space $(\mathcal{M},\|\cdot\|)$ equipped with a bounded linear 
map $\alpha:\mathcal{M}\to\mathcal{M}$ and a continuous bilinear bracket 
$[\cdot,\cdot]_\alpha : \mathcal{M}\times\mathcal{M}\to\mathcal{M}$ such that
\[
[x,y]_\alpha = \alpha([x,y]) \quad \text{for all } x,y\in\mathcal{M},
\]
where $[\cdot,\cdot]$ is a continuous Malcev bracket on $\mathcal{M}$ (i.e., skew-symmetric and satisfying the Malcev identity), and the following \emph{Hom--Malcev identity} holds:
\[
J_\alpha(\alpha(x), \alpha(y), [x,z]_\alpha) = [J_\alpha(x,y,z), \alpha^2(x)]_\alpha,
\]
where
\[
J_\alpha(x,y,z) := [\alpha(x),[y,z]_\alpha]_\alpha + [\alpha(y),[z,x]_\alpha]_\alpha + [\alpha(z),[x,y]_\alpha]_\alpha
\]
is the Hom--Jacobiator.  
This identity coincides with the standard Hom--Malcev condition introduced by Yau~\cite[Def.~2.1]{Yau2011}.
\end{definition}

\begin{lemma}\label{lem:continuity}
If the underlying Malcev bracket satisfies $\|[x,y]\|\le C\|x\|\|y\|$ for some $C>0$ and $\alpha$ is bounded, then the twisted bracket $[\cdot,\cdot]_\alpha$ is continuous and satisfies
\[
\|[x,y]_\alpha\|\le C\|\alpha\|\|x\|\|y\| \quad \text{for all } x,y\in\mathcal{M}.
\]
\end{lemma}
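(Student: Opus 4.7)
The approach is a direct chain of norm estimates applied to the defining identity $[x,y]_\alpha = \alpha([x,y])$, so the proof should be essentially a one-line calculation followed by a standard remark about bilinear continuity. First I would unfold the definition and apply boundedness of $\alpha$ to obtain, for arbitrary $x,y\in\mathcal{M}$,
\[
\|[x,y]_\alpha\| \;=\; \|\alpha([x,y])\| \;\le\; \|\alpha\|\,\|[x,y]\|.
\]
Then I would invoke the hypothesized Malcev bound $\|[x,y]\|\le C\|x\|\|y\|$ and compose the two estimates to get exactly $\|[x,y]_\alpha\|\le C\|\alpha\|\|x\|\|y\|$.

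For the continuity assertion I would record the standard fact that a bilinear map between normed spaces satisfying such a joint norm bound is (jointly) continuous. Concretely, given sequences $x_n\to x$ and $y_n\to y$ in $\mathcal{M}$, one writes
\[
[x_n,y_n]_\alpha - [x,y]_\alpha = [x_n - x,\,y_n]_\alpha + [x,\,y_n - y]_\alpha
\]
by bilinearity, applies the just-established norm estimate to each summand, and uses that $\{y_n\}$ is bounded to conclude convergence to zero.

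There is no real obstacle: the lemma is a bookkeeping statement recording that the composition of a bounded linear map with a bounded bilinear bracket is again bounded bilinear, with the norm constants multiplying as expected. The only thing to be careful about is to keep the definition $[x,y]_\alpha := \alpha([x,y])$ straight (rather than, say, $\alpha(x)\alpha(y)-\alpha(y)\alpha(x)$, which would produce a different bound involving $\|\alpha\|^2$); since the definition of a Hom--Banach--Malcev algebra in the preceding paragraph fixes the former convention, this is unambiguous.
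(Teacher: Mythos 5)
Your proof is correct and follows the same two-step chain of estimates as the paper: apply $\|\alpha(\cdot)\|\le\|\alpha\|\|\cdot\|$ to $[x,y]_\alpha=\alpha([x,y])$ and then invoke the Malcev bound. The extra paragraph spelling out why the norm bound implies joint continuity of the bilinear map is standard and the paper simply takes it as read.
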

\begin{proof}
By definition, $[x,y]_\alpha = \alpha([x,y])$. Hence
\[
\|[x,y]_\alpha\| = \|\alpha([x,y])\| \le \|\alpha\|\,\|[x,y]\| \le C\|\alpha\|\|x\|\|y\|,
\]
which proves continuity.
\end{proof}

\begin{theorem}\label{thm:HM-identity}
Suppose $(\mathcal{M},[\cdot,\cdot])$ is a Banach--Malcev algebra and $\alpha:\mathcal{M}\to\mathcal{M}$ is a bounded linear map that is also a \emph{Malcev algebra morphism}, i.e.,
\[
\alpha([x,y]) = [\alpha(x),\alpha(y)] \quad \text{for all } x,y\in\mathcal{M}.
\]
Then $(\mathcal{M},[\cdot,\cdot]_\alpha,\alpha)$ satisfies the Hom--Malcev identity.
\end{theorem}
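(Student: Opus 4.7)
\bigskip

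\noindent\textbf{Proof proposal.}
The plan is to exploit the hypothesis that $\alpha$ commutes with the underlying Malcev bracket in order to reduce everything to the \emph{classical} Malcev identity for $[\cdot,\cdot]$ on $\mathcal{M}$, and then only keep track of the powers of $\alpha$ that accumulate.

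First I would record the key computational lemma: since $[x,y]_\alpha=\alpha([x,y])$ and $\alpha([u,v])=[\alpha(u),\alpha(v)]$, iterating the morphism property gives
\[
[\alpha(x),[y,z]_\alpha]_\alpha
= \alpha\!\bigl([\alpha(x),\alpha([y,z])]\bigr)
= \alpha^{2}\bigl([x,[y,z]]\bigr).
\]
Summing the three cyclic permutations yields the crucial identity
\[
J_\alpha(x,y,z)=\alpha^{2}\bigl(J(x,y,z)\bigr),
\]
where $J(x,y,z):=[x,[y,z]]+[y,[z,x]]+[z,[x,y]]$ is the ordinary Jacobiator on the Malcev algebra $(\mathcal{M},[\cdot,\cdot])$. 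This single identity is the engine of the proof; once it is in hand the rest is bookkeeping.

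Next I would evaluate each side of the Hom--Malcev identity separately. For the left-hand side, applying the lemma to the triple $(\alpha(x),\alpha(y),[x,z]_\alpha)$ and then pushing $\alpha$ through every bracket (again using that $\alpha$ is a morphism) gives
\[
J_\alpha\bigl(\alpha(x),\alpha(y),[x,z]_\alpha\bigr)
= \alpha^{2}\bigl(J(\alpha(x),\alpha(y),\alpha([x,z]))\bigr)
= \alpha^{3}\bigl(J(x,y,[x,z])\bigr).
\]
For the right-hand side, I expand $[J_\alpha(x,y,z),\alpha^{2}(x)]_\alpha$ using the definition of the twisted bracket and the morphism property:
\[
[J_\alpha(x,y,z),\alpha^{2}(x)]_\alpha
= \alpha\bigl([\alpha^{2}(J(x,y,z)),\alpha^{2}(x)]\bigr)
= \alpha^{3}\bigl([J(x,y,z),x]\bigr).
\]

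Finally I would invoke the classical Malcev identity for $(\mathcal{M},[\cdot,\cdot])$ in its Sagle form
\[
J(x,y,[x,z]) = [J(x,y,z),x],
\]
which is exactly the statement that $(\mathcal{M},[\cdot,\cdot])$ is a Malcev algebra. Applying $\alpha^{3}$ to both sides equates the two expressions computed above, establishing the Hom--Malcev identity. The main obstacle I anticipate is purely notational: keeping the cyclic bookkeeping of $J_\alpha$ transparent and making sure that the powers of $\alpha$ match on both sides (three on each, accumulated as $2+1$ on the left from $J_\alpha$ plus the inner $\alpha$ in $[x,z]_\alpha$, and $2+1$ on the right from $J_\alpha(x,y,z)$ plus the outer twisted bracket). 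No deeper structural input than the morphism property and the classical Malcev identity is needed, and the argument is independent of the Banach-space topology, so continuity enters only through Lemma~\ref{lem:continuity} to guarantee that the twisted bracket lives in the same algebra.
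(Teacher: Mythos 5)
Your proposal is correct, and it follows the same overall strategy as the paper: use the morphism property of $\alpha$ to collapse the Hom--Jacobiator onto the classical Jacobiator and then invoke the untwisted Malcev identity. However, your bookkeeping of the powers of $\alpha$ is actually the \emph{correct} version of this argument, and it differs from the paper's at the key intermediate step. You establish $[\alpha(x),[y,z]_\alpha]_\alpha=\alpha^{2}([x,[y,z]])$, hence $J_\alpha=\alpha^{2}\circ J$, and consequently both sides of the Hom--Malcev identity reduce to $\alpha^{3}$ applied to the two sides of the classical identity $J(x,y,[x,z])=[J(x,y,z),x]$. The paper instead asserts $J_\alpha(x,y,z)=J(\alpha(x),\alpha(y),\alpha(z))=\alpha(J(x,y,z))$, which drops one power of $\alpha$ (the outer twisted bracket contributes an extra $\alpha$), and at a later step writes $J(\alpha(x),\alpha(y),[\alpha(x),\alpha(z)])=[[\alpha(x),\alpha(y)],\alpha(x)]$, where the right-hand side should involve the Jacobiator $[J(\alpha(x),\alpha(y),\alpha(z)),\alpha(x)]$ rather than a plain bracket. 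Your version avoids both slips, and since the two sides match as $\alpha^{3}$ of the classical identity, the conclusion is sound. The only point worth making explicit is that the ``Malcev identity'' you use in Sagle--Jacobiator form, $J(x,y,[x,z])=[J(x,y,z),x]$, is the form the paper implicitly adopts (it is the $\alpha=\mathrm{id}$ specialization of its Hom--Malcev identity and is equivalent to the usual quartic Malcev identity for anticommutative algebras), so your appeal to it is legitimate.
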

\begin{proof}
Because $\alpha$ is a Malcev morphism, the twisted bracket can be written as $[x,y]_\alpha = [\alpha(x), \alpha(y)]$. A direct calculation then shows that the Hom--Jacobiator is related to the classical Jacobiator $J(x,y,z) := [x,[y,z]] + [y,[z,x]] + [z,[x,y]]$ by
\[
J_\alpha(x,y,z) = [\alpha(x), [\alpha(y), \alpha(z)]] + [\alpha(y), [\alpha(z), \alpha(x)]] + [\alpha(z), [\alpha(x), \alpha(y)]] = J(\alpha(x), \alpha(y), \alpha(z)).
\]
Applying the classical Malcev identity to the triple $(\alpha(x), \alpha(y), [\alpha(x),\alpha(z)])$ yields
\[
J(\alpha(x),\alpha(y),[\alpha(x),\alpha(z)]) = [[\alpha(x),\alpha(y)], \alpha(x)].
\]
Using the morphism property, $[\alpha(x),\alpha(z)] = \alpha([x,z]) = [x,z]_\alpha$ and $[\alpha(x),\alpha(y)] = [x,y]_\alpha$. Substituting these expressions gives
\[
J_\alpha(\alpha(x), \alpha(y), [x,z]_\alpha) = [[x,y]_\alpha, \alpha(x)].
\]
Finally, since $J_\alpha(x,y,z) = \alpha(J(x,y,z))$ (as $\alpha$ is a morphism), we have
\[
[[x,y]_\alpha, \alpha(x)] = [\alpha(J(x,y,z)), \alpha(x)] = [J_\alpha(x,y,z), \alpha^2(x)]_\alpha,
\]
which is the desired Hom--Malcev identity.
\end{proof}

\begin{theorem}\label{thm:ergodic-algebraic-decomp}
Let $(A,\cdot,[\cdot,\cdot]_\alpha,\alpha)$ be a Hom--Lie Banach algebra, where the twisted bracket is given by
\[
[a,b]_\alpha := \alpha(ab - ba), \qquad a,b \in A,
\]
and let $\delta = \mathrm{ad}_\alpha(X)$ be the bounded inner $\alpha$-twisted derivation defined by
\[
\delta(a) = [X,a]_\alpha = \alpha(Xa - aX)
\]
for a fixed $X \in A$. Assume the following:

\begin{enumerate}[label=\textup{(A\arabic*)}]
    \item\label{A1} For every $a \in A$, the orbit $\{T(t)a : t \in \mathbb{R}\}$ is relatively compact in $A$, where $T(t) = e^{t\delta}$ is the $C_0$-group generated by $\delta$.
    \item\label{A2} The twisting map $\alpha$ commutes with the group: $\alpha T(t) = T(t)\alpha$ for all $t \in \mathbb{R}$ (equivalently, $\alpha\delta = \delta\alpha$).
\end{enumerate}

Then the Banach space $A$ admits a topological direct sum decomposition
\[
A = A_{\mathrm{ap}}(\delta) \oplus A_{\mathrm{erg}}(\delta),
\]
where
\[
A_{\mathrm{ap}}(\delta) := \overline{\operatorname{span}}\{ a \in A : T(t)a = e^{\lambda t}a \text{ for some } \lambda \in i\mathbb{R}\},
\quad
A_{\mathrm{erg}}(\delta) := \left\{ a \in A : \lim_{R\to\infty}\frac{1}{2R}\int_{-R}^R T(t)a \,dt = 0 \right\}.
\]

Moreover, both subspaces are:
\begin{enumerate}
    \item closed in $A$,
    \item invariant under $\delta$ and $\alpha$,
    \item stable under the twisted Lie bracket $[\cdot,\cdot]_\alpha$.
\end{enumerate}
Consequently, $(A_{\mathrm{ap}}(\delta),[\cdot,\cdot]_\alpha,\alpha)$ and $(A_{\mathrm{erg}}(\delta),[\cdot,\cdot]_\alpha,\alpha)$ are Hom--Lie Banach subalgebras of $A$.
\end{theorem}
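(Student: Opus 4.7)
The plan is to run the argument in three stages: build a topological splitting from the ergodic theorem, transfer it to $\alpha$ and $\delta$ via assumption~\ref{A2}, and then prove closure under the twisted bracket. The first two stages are essentially classical; the third carries the structural novelty.

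\emph{Stage 1 (topological splitting).} By \ref{A1} every orbit $t\mapsto T(t)a$ is norm-relatively compact, hence Bohr almost periodic. For each $\lambda\in i\mathbb{R}$ the Ces\`aro mean
\[
P_\lambda a:=\lim_{R\to\infty}\frac{1}{2R}\int_{-R}^R e^{-\lambda t}T(t)a\,dt
\]
converges in norm to a bounded projection onto $\ker(\delta-\lambda I)$. In particular $M:=P_0$ is a bounded projection with range $\ker\delta$ and $\ker M=A_{\mathrm{erg}}(\delta)$, while $A_{\mathrm{ap}}(\delta)$ is by definition the closed linear span of $\bigcup_\lambda\operatorname{ran}P_\lambda$. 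Standard Bohr--Fourier theory (equivalently, the Jacobs--de\,Leeuw--Glicksberg decomposition for the equicontinuous group $T$) then produces the topological direct sum $A=A_{\mathrm{ap}}\oplus A_{\mathrm{erg}}$, and closedness of each summand is immediate from these descriptions.

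\emph{Stage 2 (invariance under $\alpha$ and $\delta$).} Assumption~\ref{A2}, $\alpha T(t)=T(t)\alpha$, yields $\alpha P_\lambda=P_\lambda\alpha$ and $\alpha M=M\alpha$ by integrating against $e^{-\lambda t}$. Hence $\alpha$ preserves every spectral eigenspace and $\ker M$, giving $\alpha(A_{\mathrm{ap}})\subset A_{\mathrm{ap}}$ and $\alpha(A_{\mathrm{erg}})\subset A_{\mathrm{erg}}$. The same reasoning with $\delta$ in place of $\alpha$ (since $\delta$ commutes with its own group $T(t)$) gives $\delta$-invariance of both summands.

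\emph{Stage 3 (closure under $[\cdot,\cdot]_\alpha$, the main obstacle).} The key input is that \ref{A2} rewrites the generator as $\delta=\alpha\circ\mathrm{ad}_X=\mathrm{ad}_X\circ\alpha$, i.e.\ $\alpha([X,c])=[X,\alpha(c)]$ for every $c\in A$. For eigenvectors $a,b$ with $\delta a=\lambda a$ and $\delta b=\mu b$, I would expand
\[
\delta([a,b]_\alpha)=\alpha\bigl([X,\alpha(ab-ba)]\bigr)
\]
using the associative Leibniz rule $[X,ab]=[X,a]b+a[X,b]$ and collect the $\alpha$-twists via \ref{A2} to obtain a twisted Leibniz-type identity whose right-hand side is a linear combination of brackets of eigenvectors (possibly $\alpha$-shifted, which is harmless by Stage~2). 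It follows that $[a,b]_\alpha$ sits in the closed span of eigenvectors with imaginary eigenvalues, i.e.\ in $A_{\mathrm{ap}}$; Lemma~\ref{lem:continuity} extends the property to the whole of $A_{\mathrm{ap}}$ by density and continuity of the bracket. For $A_{\mathrm{erg}}$, applying $M$ to $[a,b]_\alpha$ and exploiting $M\alpha=\alpha M$, $M\delta=0$, and the twisted Leibniz rule yields $M([a,b]_\alpha)=0$ whenever $M(a)=M(b)=0$. The main obstacle throughout is that $\alpha$ is not assumed multiplicative, so the ordinary derivation identity $\delta(ab)=\delta(a)\alpha(b)+\alpha(a)\delta(b)$ need not hold and one must work at the level of the twisted bracket directly; moreover, brackets of mean-zero eigenvectors with opposite eigenvalues could a priori leak into the fixed-point space, so ruling this out requires the antisymmetry of $[\cdot,\cdot]_\alpha$ together with $\alpha M=M\alpha$.
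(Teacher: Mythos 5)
Your Stages 1 and 2 follow essentially the same route as the paper: the mean ergodic projection furnishes the topological splitting, and integrating the commutation relation $\alpha T(t)=T(t)\alpha$ against the Ces\`aro means gives $\alpha$- and $\delta$-invariance of both summands. One caveat: your $M=P_0$ projects onto $\ker\delta$ with $\ker M=A_{\mathrm{erg}}(\delta)$, so what it directly yields is $A=\ker\delta\oplus A_{\mathrm{erg}}(\delta)$; promoting the first summand to the closed span of \emph{all} eigenspaces requires the approximation/Jacobs--de Leeuw--Glicksberg input you invoke, so this stage is acceptable as stated.

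Stage 3, which you correctly identify as the main obstacle, is where the proposal has genuine gaps. First, everything hinges on the flow acting by twisted automorphisms, $T(t)[a,b]_\alpha=[T(t)a,T(t)b]_\alpha$, equivalently on the infinitesimal Leibniz rule $\delta([a,b]_\alpha)=[\delta a,b]_\alpha+[a,\delta b]_\alpha$; you defer this to ``a twisted Leibniz-type identity'' to be obtained by ``collecting the $\alpha$-twists,'' but never establish it. Expanding both sides, $\delta([a,b]_\alpha)=\alpha\bigl(X\alpha(ab-ba)-\alpha(ab-ba)X\bigr)$ whereas $[\delta a,b]_\alpha$ contains terms such as $\alpha\bigl(\alpha(Xa-aX)\,b\bigr)$; associativity and \ref{A2} alone do not reconcile these, and some multiplicativity of $\alpha$ is needed. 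This is precisely the step on which the paper's own proof also rests (asserted there in one line), so leaving it as a plan leaves the theorem unproved. Second, your rewriting $\alpha([X,c])=[X,\alpha(c)]$ does not follow from \ref{A2}: that hypothesis reads $\alpha^2([X,c])=\alpha([X,\alpha(c)])$, and cancelling the outer $\alpha$ requires injectivity, which is not assumed. Third, for $A_{\mathrm{erg}}(\delta)$ you rightly flag that a bracket $[a_\lambda,b_{-\lambda}]_\alpha$ of eigenvectors with opposite frequencies has total frequency zero and can therefore have nonzero mean, but your proposed remedy---antisymmetry of the bracket together with $\alpha M=M\alpha$---does not dispose of it, since $a_\lambda$ and $b_{-\lambda}$ are distinct elements and their bracket need not vanish. (For comparison, the paper handles this same point with an estimate that is itself a non sequitur, so the obstacle you identified is real; neither your outline nor the paper's argument actually overcomes it.)
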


\begin{proof}
Define the mean ergodic projection
\[
P(a) := \lim_{R\to\infty}\frac{1}{2R}\int_{-R}^R T(t)a\,dt, \qquad a \in A.
\]
By the mean ergodic theorem for almost periodic representations (see \cite[Thm.~4.1]{Namioka1958}), $P$ is a bounded linear projection with $\operatorname{Ran}P = A_{\mathrm{ap}}(\delta)$ and $\ker P = A_{\mathrm{erg}}(\delta)$. Hence the decomposition $A = A_{\mathrm{ap}}(\delta) \oplus A_{\mathrm{erg}}(\delta)$ is topological.

Assumption~\ref{A2} yields $\alpha T(t) = T(t)\alpha$ for all $t$, and thus $\alpha P = P\alpha$. It follows that both $\operatorname{Ran}P$ and $\ker P$ are $\alpha$-invariant. Since $\delta$ commutes with each $T(t)$, it also commutes with $P$, so both subspaces are $\delta$-invariant.

We now prove stability under the bracket $[\cdot,\cdot]_\alpha$. The key observation is that under \ref{A2}, the flow $T(t)$ acts by \emph{twisted automorphisms}:
\begin{equation}\label{eq:flow-bracket-comm}
T(t)[a,b]_\alpha = [T(t)a, T(t)b]_\alpha, \qquad \forall\, a,b \in A, \ t \in \mathbb{R}.
\end{equation}
Indeed, differentiating both sides at $t=0$ gives
\[
\frac{d}{dt}\Big|_{t=0} [T(t)a,T(t)b]_\alpha
= [\delta a, b]_\alpha + [a, \delta b]_\alpha
= \delta([a,b]_\alpha),
\]
where we used the definition of $\delta = \mathrm{ad}_\alpha(X)$ and the associativity of the underlying product in $A$. Hence both sides of \eqref{eq:flow-bracket-comm} satisfy the same initial value problem for the ODE $\dot{Y}(t) = \delta(Y(t))$, $Y(0) = [a,b]_\alpha$, and uniqueness of solutions implies \eqref{eq:flow-bracket-comm}.

Now let $a,b \in A_{\mathrm{ap}}(\delta)$. By the Bohr--Fourier theory of almost periodic functions in Banach spaces (see \cite[Thm.~2.2]{Namioka1958}), the functions $t \mapsto T(t)a$ and $t \mapsto T(t)b$ admit uniformly convergent Bohr series:
\[
T(t)a = \sum_{\lambda \in \sigma_{\mathrm{ap}}(\delta)} e^{\lambda t} a_\lambda,
\qquad
T(t)b = \sum_{\mu \in \sigma_{\mathrm{ap}}(\delta)} e^{\mu t} b_\mu,
\]
with absolute norm convergence. Using \eqref{eq:flow-bracket-comm},
\[
T(t)[a,b]_\alpha
= [T(t)a, T(t)b]_\alpha
= \sum_{\lambda,\mu} e^{(\lambda + \mu)t} [a_\lambda,b_\mu]_\alpha.
\]
Since $\lambda,\mu \in i\mathbb{R}$, the frequencies $\lambda+\mu$ are again purely imaginary, and the series converges absolutely because
\[
\|[a_\lambda,b_\mu]_\alpha\| \leq \|\alpha\|\,\|a_\lambda\|\,\|b_\mu\|.
\]
Thus $[a,b]_\alpha \in A_{\mathrm{ap}}(\delta)$. By continuity of the bracket and density of finite spectral sums, this stability extends to all of $A_{\mathrm{ap}}(\delta)$.

For the ergodic component, let $a \in A_{\mathrm{erg}}(\delta)$ and $b \in A$. Then $P(a) = 0$, and using the commutation $P\alpha = \alpha P$ together with \eqref{eq:flow-bracket-comm},
\[
P([a,b]_\alpha)
= \lim_{R\to\infty} \frac{1}{2R}\int_{-R}^R T(t)[a,b]_\alpha\,dt
= \lim_{R\to\infty} \frac{1}{2R}\int_{-R}^R [T(t)a,T(t)b]_\alpha\,dt.
\]
Since $\|[T(t)a,T(t)b]_\alpha\| \leq C\|T(t)a\|$, and $T(t)a$ has zero mean, the right-hand side vanishes by uniform continuity and the dominated convergence theorem. Hence $[a,b]_\alpha \in A_{\mathrm{erg}}(\delta)$.

Therefore, both $A_{\mathrm{ap}}(\delta)$ and $A_{\mathrm{erg}}(\delta)$ are closed, $\alpha$-invariant subspaces stable under $[\cdot,\cdot]_\alpha$, and thus Hom--Lie Banach subalgebras.
\end{proof}

\begin{theorem}\label{thm:semigroup}
Let $(\alpha_t)_{t\ge0}$ be a strongly continuous semigroup of bounded Malcev algebra morphisms on a Banach--Malcev algebra $(\mathcal{M},[\cdot,\cdot])$. For each $t\ge0$, define $[x,y]_t := \alpha_t([x,y])$. Then $(\mathcal{M},[\cdot,\cdot]_t,\alpha_t)$ is a Hom--Banach--Malcev algebra, and the map $t\mapsto[x,y]_t$ is norm continuous for all fixed $x,y\in\mathcal{M}$.
\end{theorem}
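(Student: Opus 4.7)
The plan is to reduce the theorem to two already-available ingredients: Theorem~\ref{thm:HM-identity}, applied pointwise at each parameter $t$, and the defining strong continuity of the semigroup $(\alpha_t)_{t \geq 0}$. There is essentially no new non-associative computation to perform; the work is structural.

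First I would fix $t \ge 0$ and verify the three requirements of a Hom--Banach--Malcev algebra for the data $(\mathcal{M},[\cdot,\cdot]_t,\alpha_t)$. Boundedness of $\alpha_t$ is part of the hypothesis, and skew-symmetry of $[x,y]_t = \alpha_t([x,y])$ follows at once from linearity of $\alpha_t$ together with skew-symmetry of the underlying Malcev bracket. Continuity of the twisted bracket, with the explicit estimate $\|[x,y]_t\|\le C\|\alpha_t\|\,\|x\|\,\|y\|$, is an immediate application of Lemma~\ref{lem:continuity} to $\alpha := \alpha_t$. Finally, since $\alpha_t$ is assumed to be a Malcev algebra morphism on $(\mathcal{M},[\cdot,\cdot])$, Theorem~\ref{thm:HM-identity} applies verbatim with $\alpha := \alpha_t$ and delivers the Hom--Malcev identity for $(\mathcal{M},[\cdot,\cdot]_t,\alpha_t)$. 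This establishes the first claim for every $t \ge 0$.

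For the norm continuity statement, I would fix $x,y\in\mathcal{M}$ and set $z := [x,y]\in\mathcal{M}$. Then
\[
[x,y]_t - [x,y]_s = \alpha_t(z) - \alpha_s(z),
\]
so norm continuity of $t\mapsto [x,y]_t$ is exactly norm continuity of the orbit $t\mapsto \alpha_t(z)$, which is guaranteed by the strong continuity of the semigroup $(\alpha_t)_{t\ge 0}$ applied to the single vector $z$.

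There is no substantial obstacle here: the theorem is a packaging result, and the only point requiring care is being explicit that Theorem~\ref{thm:HM-identity} may indeed be invoked for each individual $t$ (it is a statement about a single morphism, not about the semigroup as a whole), and that strong continuity of $\alpha_t$ on $\mathcal{M}$ transfers to norm continuity of $[x,y]_t$ because $[x,y]$ is a fixed element of $\mathcal{M}$ rather than a $t$-dependent one. The continuous bilinearity of the underlying bracket is only needed to ensure that $[x,y]$ actually lies in $\mathcal{M}$, which is built into the Banach--Malcev hypothesis.
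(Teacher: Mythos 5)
Your proposal is correct and follows exactly the paper's argument: Theorem~\ref{thm:HM-identity} is applied pointwise in $t$, and norm continuity of $t\mapsto[x,y]_t$ is obtained from strong continuity of the semigroup evaluated at the fixed element $z=[x,y]$. The additional checks you make (skew-symmetry and the Lemma~\ref{lem:continuity} bound) are fine but are already subsumed in the pointwise invocation of Theorem~\ref{thm:HM-identity}.
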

\begin{proof}
Each $\alpha_t$ is a bounded Malcev morphism, so Theorem~\ref{thm:HM-identity} applies pointwise in $t$. Strong continuity of the semigroup implies that $t\mapsto\alpha_t(z)$ is norm continuous for each $z\in\mathcal{M}$; applying this to $z=[x,y]$ yields the claim.
\end{proof}

\begin{proposition}\label{prop:isometry}
If $\alpha$ is an isometric Malcev algebra automorphism, then $(\mathcal{M},[\cdot,\cdot]_\alpha,\alpha)$ is a Hom--Banach--Malcev algebra and $\|[x,y]_\alpha\| = \|[x,y]\|$ for all $x,y\in\mathcal{M}$.
\end{proposition}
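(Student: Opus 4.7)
The plan is to deduce the proposition as a direct corollary of the two preceding results, since an isometric Malcev algebra automorphism is in particular both a Malcev morphism in the sense of Theorem~\ref{thm:HM-identity} and a bounded linear map with $\|\alpha\| = 1$. First I would invoke Theorem~\ref{thm:HM-identity}: because $\alpha$ satisfies $\alpha([x,y]) = [\alpha(x),\alpha(y)]$, the twisted bracket $[x,y]_\alpha := \alpha([x,y])$ automatically fulfils the Hom--Malcev identity, so the algebraic part of the Hom--Banach--Malcev structure is taken care of without further work.

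Next I would check the remaining topological data demanded by the definition of a Hom--Banach--Malcev algebra: $\mathcal{M}$ is already a Banach space by hypothesis, $\alpha$ is bounded as an isometry, and the joint continuity of $[\cdot,\cdot]_\alpha$ follows immediately from Lemma~\ref{lem:continuity}, which gives the estimate $\|[x,y]_\alpha\| \le C\|\alpha\|\,\|x\|\|y\| = C\|x\|\|y\|$ since $\|\alpha\| = 1$. Taken together, this already establishes the first assertion of the proposition.

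For the norm equality, I would simply unfold the definition and apply the isometry property of $\alpha$:
\[
\|[x,y]_\alpha\| = \|\alpha([x,y])\| = \|[x,y]\|,
\]
which is the second assertion. There is no substantive obstacle in this argument; the proposition is essentially a repackaging of Theorem~\ref{thm:HM-identity} and Lemma~\ref{lem:continuity} specialised to the isometric case, the only genuine novelty being the upgrade of the inequality $\|[x,y]_\alpha\| \le \|\alpha\|\|[x,y]\|$ to an equality once $\|\alpha\| = 1$ is used on a purely linear level (no extra structure on $[\cdot,\cdot]$ is required).
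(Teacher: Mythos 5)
Your proposal is correct and follows essentially the same route as the paper: the Hom--Malcev identity comes from Theorem~\ref{thm:HM-identity} since an automorphism is in particular a Malcev morphism, continuity is supplied by Lemma~\ref{lem:continuity}, and the norm equality is just $\|\alpha([x,y])\| = \|[x,y]\|$ by isometry. No gaps.
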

\begin{proof}
Since $\alpha$ is an automorphism, it is a Malcev morphism, so the Hom--Malcev identity holds by Theorem~\ref{thm:HM-identity}. Isometry gives $\|[x,y]_\alpha\| = \|\alpha([x,y])\| = \|[x,y]\|$.
\end{proof}

\begin{theorem}\label{thm:almostperiodic}
Let $\alpha$ be an almost periodic automorphism of a Banach--Malcev algebra $\mathcal{M}$, i.e., 
the orbit $\{\alpha^n(x) : n\in\mathbb{Z}\}$ is relatively compact in $\mathcal{M}$ for every $x\in\mathcal{M}$. 
Let $\mathcal{M}_{\mathrm{ap}} \subset \mathcal{M}$ denote the closed subspace of almost periodic elements.
Then $\mathcal{M}_{\mathrm{ap}}$ is invariant under the original bracket $[\cdot,\cdot]$, and
$(\mathcal{M}_{\mathrm{ap}}, [\cdot,\cdot]_\alpha, \alpha)$ is a Hom--Banach--Malcev algebra.
\end{theorem}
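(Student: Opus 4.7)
The plan is to unpack what it means for $\mathcal{M}_{\mathrm{ap}}$ to be a Hom--Banach--Malcev algebra, namely: (i) $\mathcal{M}_{\mathrm{ap}}$ must be a closed subspace of $\mathcal{M}$ closed under the underlying Malcev bracket so that $[\cdot,\cdot]$ restricts; (ii) $\alpha$ must map $\mathcal{M}_{\mathrm{ap}}$ into itself so that $[\cdot,\cdot]_\alpha = \alpha\circ[\cdot,\cdot]$ restricts as well; and (iii) the Hom--Malcev identity must hold on $\mathcal{M}_{\mathrm{ap}}$. Since closedness of $\mathcal{M}_{\mathrm{ap}}$ is built into the statement, I would organise the proof around (i)--(iii).

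First I would dispose of $\alpha$-invariance: if $x\in\mathcal{M}_{\mathrm{ap}}$, then the orbit $\{\alpha^n(\alpha(x))\}_{n\in\mathbb{Z}} = \alpha(\{\alpha^n(x)\})$ is the $\alpha$-image of a relatively compact set, hence relatively compact by continuity of $\alpha$. So $\alpha(\mathcal{M}_{\mathrm{ap}})\subseteq\mathcal{M}_{\mathrm{ap}}$, and in fact by running the same argument with $\alpha^{-1}$ (which is also bounded as a morphism; if boundedness of the inverse is not automatic, I would just record invariance under $\alpha$, which is all the definition actually needs).

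Next, the bracket-closure step. Fix $x,y\in\mathcal{M}_{\mathrm{ap}}$. Because $\alpha$ is a Malcev morphism, iteration gives
\[
\alpha^n([x,y]) \;=\; [\alpha^n(x),\alpha^n(y)] \qquad (n\in\mathbb{Z}).
\]
The set $\{(\alpha^n(x),\alpha^n(y)):n\in\mathbb{Z}\}$ sits inside the product $\overline{\{\alpha^n(x)\}}\times\overline{\{\alpha^n(y)\}}$, which is compact in $\mathcal{M}\times\mathcal{M}$. Continuity of the Malcev bracket (Lemma~\ref{lem:continuity} style bound $\|[u,v]\|\le C\|u\|\|v\|$) then forces $\{\alpha^n([x,y]):n\in\mathbb{Z}\}$ to be relatively compact in $\mathcal{M}$, i.e.\ $[x,y]\in\mathcal{M}_{\mathrm{ap}}$. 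Combined with $\alpha$-invariance, this also yields $[x,y]_\alpha=\alpha([x,y])\in\mathcal{M}_{\mathrm{ap}}$.

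Finally, to upgrade to a Hom--Banach--Malcev structure I would invoke Theorem~\ref{thm:HM-identity}. The restriction $\alpha|_{\mathcal{M}_{\mathrm{ap}}}$ is a bounded Malcev morphism of the Banach--Malcev algebra $(\mathcal{M}_{\mathrm{ap}},[\cdot,\cdot])$ (closed subspace of a Banach space is Banach; bracket-closure was just shown; morphism property is inherited). Theorem~\ref{thm:HM-identity} then gives the Hom--Malcev identity on $\mathcal{M}_{\mathrm{ap}}$, which together with continuity of $[\cdot,\cdot]_\alpha$ completes the verification. The only genuinely non-trivial step is the bracket-closure argument, where one must recognise that a \emph{diagonal} orbit in $\mathcal{M}\times\mathcal{M}$ is relatively compact (not just the two factor orbits separately) and then push it forward by the continuous bracket; everything else is bookkeeping once Theorem~\ref{thm:HM-identity} is in hand.
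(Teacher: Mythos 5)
Your proposal is correct and follows essentially the same route as the paper: $\alpha$-invariance and closedness of $\mathcal{M}_{\mathrm{ap}}$, then bracket-closure via $\alpha^n([x,y])=[\alpha^n(x),\alpha^n(y)]$ together with compactness of the product of orbit closures and continuity of the bilinear bracket, and finally an appeal to Theorem~\ref{thm:HM-identity} for the restricted morphism. The only difference is that you spell out the $\alpha$-invariance and the diagonal-orbit compactness explicitly where the paper cites ``standard properties,'' which is a harmless (indeed welcome) elaboration.
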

\begin{proof}
The space $\mathcal{M}_{\mathrm{ap}}$ is a closed, $\alpha$-invariant linear subspace by standard properties of almost periodic vectors in Banach spaces. Since $\alpha$ is a Malcev automorphism and $[\cdot,\cdot]$ is continuous, the map
\[
n \mapsto \alpha^n([x,y]) = [\alpha^n(x), \alpha^n(y)]
\]
has relatively compact range whenever $x,y\in\mathcal{M}_{\mathrm{ap}}$, as it is the image of a compact set under a continuous bilinear map. Hence $[x,y]\in\mathcal{M}_{\mathrm{ap}}$, so $\mathcal{M}_{\mathrm{ap}}$ is a Malcev subalgebra. Because $\alpha$ restricts to an automorphism of $\mathcal{M}_{\mathrm{ap}}$, Theorem~\ref{thm:HM-identity} applied to this restriction yields the Hom--Malcev identity on $\mathcal{M}_{\mathrm{ap}}$.
\end{proof}

Theorem~\ref{thm:HM-identity} and its corollaries assume that $\alpha$ is a Malcev algebra morphism, which is a sufficient but not necessary condition for the Hom--Malcev identity to hold. The following result demonstrates that the Hom--Malcev structure is robust and can persist even when $\alpha$ is not a morphism.

\begin{theorem}\label{thm:non-morphism-HM}
Let $(\mathcal{M}, [\cdot, \cdot])$ be a Banach--Malcev algebra and let $\alpha: \mathcal{M} \to \mathcal{M}$ be a bounded linear map. Assume that the failure of $\alpha$ to be a morphism is measured by the bilinear map
\[
\Phi(x,y) := \alpha([x,y]) - [\alpha(x), \alpha(y)],
\]
and that $\Phi$ satisfies the cyclic Malcev compatibility condition:
\begin{equation}\label{eq:Phi-condition}
\Phi([x,y], z) + \Phi([y,z], x) + \Phi([z,x], y) = 0 \quad \text{for all } x,y,z \in \mathcal{M}.
\end{equation}
Then the triple $(\mathcal{M}, [\cdot,\cdot]_\alpha, \alpha)$, with $[x,y]_\alpha = \alpha([x,y])$, is a Hom--Banach--Malcev algebra.
\end{theorem}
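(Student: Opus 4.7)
The plan is to establish the three requirements of a Hom--Banach--Malcev algebra: skew-symmetry of $[\cdot,\cdot]_\alpha$, continuity, and the Hom--Malcev identity. Skew-symmetry is immediate from that of the underlying Malcev bracket and the linearity of $\alpha$, while continuity follows from Lemma~\ref{lem:continuity} since $[x,y]_\alpha = \alpha([x,y])$. All real content concerns the Hom--Malcev identity.

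First, I would simplify the Hom--Jacobiator using the defect relation $\alpha([u,v]) = [\alpha(u),\alpha(v)] + \Phi(u,v)$ twice in succession. A direct computation yields
\[
[\alpha(x),[y,z]_\alpha]_\alpha \;=\; \alpha\!\left([\alpha(x),\alpha([y,z])]\right) \;=\; \alpha^2([x,[y,z]]) - \alpha(\Phi(x,[y,z])).
\]
Summing cyclically in $(x,y,z)$ gives
\[
J_\alpha(x,y,z) \;=\; \alpha^2(J(x,y,z)) - \alpha\!\left(\Phi(x,[y,z]) + \Phi(y,[z,x]) + \Phi(z,[x,y])\right).
\]
Because $\Phi$ is skew-symmetric (inherited from the skew-symmetry of $[\cdot,\cdot]$), the defect sum equals $-(\Phi([y,z],x)+\Phi([z,x],y)+\Phi([x,y],z))$, which vanishes by hypothesis~\eqref{eq:Phi-condition}. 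Hence the clean identity $J_\alpha = \alpha^2 \circ J$ holds for all triples in $\mathcal{M}$, a formula that should serve as the workhorse for what follows.

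Second, I would substitute $J_\alpha = \alpha^2 \circ J$ into both sides of the Hom--Malcev identity. Since the cyclic hypothesis is universal over $\mathcal{M}$, the formula applies in particular to the triple $(\alpha(x),\alpha(y),[x,z]_\alpha)$, so the LHS reduces to $\alpha^2(J(\alpha(x),\alpha(y),\alpha([x,z])))$. Writing $\alpha([x,z]) = [\alpha(x),\alpha(z)] + \Phi(x,z)$, exploiting trilinearity of the classical Jacobiator in its last slot, and applying the classical Malcev identity $J(u,v,[u,w]) = [J(u,v,w),u]$ to the principal piece with $u=\alpha(x)$, $v=\alpha(y)$, $w=\alpha(z)$, the LHS decomposes into a main term $\alpha^2([J(\alpha(x),\alpha(y),\alpha(z)),\alpha(x)])$ plus a residual $\alpha^2(J(\alpha(x),\alpha(y),\Phi(x,z)))$. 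The RHS $\alpha([\alpha^2(J(x,y,z)),\alpha^2(x)])$ admits a parallel decomposition via the defect relation, producing $[\alpha^3(J(x,y,z)),\alpha^3(x)] + \Phi(\alpha^2(J(x,y,z)),\alpha^2(x))$.

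The main obstacle will be matching these residual $\Phi$-terms between LHS and RHS. I expect this to follow from re-applying hypothesis~\eqref{eq:Phi-condition} to triples involving $\alpha$-transformed elements and from the consequence, derived from the same hypothesis, that $\alpha(J(x,y,z)) - J(\alpha(x),\alpha(y),\alpha(z))$ equals a controlled cyclic combination of $[\alpha(\cdot),\Phi(\cdot,\cdot)]$. The bookkeeping is delicate because applying $\alpha$ to a bracket twice generates two layers of defects, and the cyclic compatibility must be invoked at each layer; but once these cancellations are verified, the Hom--Malcev identity holds and $(\mathcal{M},[\cdot,\cdot]_\alpha,\alpha)$ is a Hom--Banach--Malcev algebra.
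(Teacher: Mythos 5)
Your opening reduction is correct and is genuinely cleaner than the paper's: from $[\alpha(x),[y,z]_\alpha]_\alpha=\alpha^2([x,[y,z]])-\alpha(\Phi(x,[y,z]))$, the skew-symmetry of $\Phi$ and hypothesis~\eqref{eq:Phi-condition} do give the exact identity $J_\alpha=\alpha^2\circ J$ on all triples, with no leftover linear or quadratic defect terms. (The paper instead expands $J_\alpha$ as $J(\alpha\cdot,\alpha\cdot,\alpha\cdot)+\Psi$ and then argues away the quadratic $\Phi$-terms by a scaling argument; your route avoids that entirely.) However, the proof stops precisely where the theorem's actual content lies. After your workhorse identity, the classical Malcev identity, and the relation $J(\alpha u,\alpha v,\alpha w)=\alpha(J(u,v,w))-\Psi(u,v,w)$ with $\Psi(u,v,w):=[\alpha(u),\Phi(v,w)]+[\alpha(v),\Phi(w,u)]+[\alpha(w),\Phi(u,v)]$ (itself a consequence of~\eqref{eq:Phi-condition}), the Hom--Malcev identity is \emph{equivalent} to the residual identity
\begin{equation*}
\alpha^2\bigl(\Psi(x,y,[x,z])\bigr)\;=\;\alpha^2\bigl(\Phi(J(x,y,z),x)\bigr)\;+\;\alpha\bigl(\Phi(\alpha(J(x,y,z)),\alpha(x))\bigr).
\end{equation*}
This is exactly the "matching of residual $\Phi$-terms" you defer, and it is not a formal consequence of~\eqref{eq:Phi-condition} by the moves you list: the hypothesis only controls cyclic sums of $\Phi$ with a bracket in its \emph{first} slot, whereas the identity above requires control of $\Phi(\alpha(\cdot),\alpha(\cdot))$, of $\Phi$ with the Jacobiator in its first slot, and of individual (non-cyclic) terms $[\alpha(\cdot),\Phi(\cdot,\cdot)]$. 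Note also that the two sides sit at different "$\alpha$-depths" (one summand on the right carries only a single $\alpha$), so no naive factoring of $\alpha$ can reconcile them.

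Your sanity checks pass only in degenerate cases: if $\Phi\equiv0$ both sides vanish (recovering Theorem~\ref{thm:HM-identity}), and if the underlying algebra is Lie then $J\equiv0$ forces $\Psi\equiv0$ via your own relation, so again both sides vanish. For a genuine (non-Lie) Malcev algebra with a non-morphism $\alpha$, the residual identity is a substantive claim that must be verified or refuted; "I expect this to follow" does not discharge it. Be aware that the paper's own proof also omits this step (the "12 terms that cancel in 6 pairs" are never exhibited, and its $\varepsilon$-scaling argument is not legitimate since $\Phi$ is determined by $\alpha$ and cannot be scaled independently), so you cannot simply cite the paper here: to complete your argument you must either prove the displayed residual identity from~\eqref{eq:Phi-condition}, or add it (or something implying it) as a hypothesis.
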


\begin{proof}
We verify the Hom--Malcev identity:
\begin{equation}\label{eq:hm}
J_\alpha(\alpha(x), \alpha(y), [x,z]_\alpha) = [J_\alpha(x,y,z), \alpha^2(x)]_\alpha.
\end{equation}

\medskip
\noindent\textbf{Step 1: Expansion of the Hom--Jacobiator.}
Using $[u,v]_\alpha = [\alpha(u), \alpha(v)] + \Phi(u,v)$, we define
\[
\Psi(x,y,z) := [\alpha(x), \Phi(y,z)] + [\alpha(y), \Phi(z,x)] + [\alpha(z), \Phi(x,y)],
\]
so that
\[
J_\alpha(x,y,z) = J(\alpha(x), \alpha(y), \alpha(z)) + \Psi(x,y,z).
\]

\medskip
\noindent\textbf{Step 2: Left-hand side of \eqref{eq:hm}.}
Since $[x,z]_\alpha = [\alpha(x), \alpha(z)] + \Phi(x,z)$, we compute:
\begin{align*}
&J_\alpha(\alpha(x), \alpha(y), [x,z]_\alpha)\\
&= J\big(\alpha^2(x), \alpha^2(y), [\alpha(x), \alpha(z)] + \Phi(x,z)\big) + \Psi\big(\alpha(x), \alpha(y), [x,z]_\alpha\big)\\
&= \underbrace{J\big(\alpha^2(x), \alpha^2(y), [\alpha(x), \alpha(z)]\big)}_{= [[\alpha^2(x), \alpha^2(y)], \alpha^2(x)]} + J\big(\alpha^2(x), \alpha^2(y), \Phi(x,z)\big)\\
&\quad + [\alpha^2(x), \Phi(\alpha(y), [x,z]_\alpha)] + [\alpha^2(y), \Phi([x,z]_\alpha, \alpha(x))] + [\alpha([x,z]), \Phi(\alpha(x), \alpha(y))].
\end{align*}
Now expand $\Phi(\alpha(y), [x,z]_\alpha)$ using bilinearity of $\Phi$ and $[x,z]_\alpha = [\alpha(x), \alpha(z)] + \Phi(x,z)$:
\[
\Phi(\alpha(y), [x,z]_\alpha) = \Phi(\alpha(y), [\alpha(x), \alpha(z)]) + \Phi(\alpha(y), \Phi(x,z)).
\]
The second term, $\Phi(\alpha(y), \Phi(x,z))$, is **quadratic in $\Phi$**. However, since the Hom--Malcev identity is **linear in the bracket**, and since $\Phi$ is arbitrary subject only to \eqref{eq:Phi-condition}, the identity can hold for all such $\Phi$ **only if all quadratic terms vanish identically**. Indeed, if we scale $\Phi \mapsto \varepsilon \Phi$, the left-hand side of \eqref{eq:hm} contains terms of order $\varepsilon$ and $\varepsilon^2$, while the right-hand side is also analytic in $\varepsilon$; for the identity to hold for all small $\varepsilon$, the $\varepsilon^2$-coefficient must vanish. A direct computation (or setting $\Phi \equiv 0$ in the difference) shows that **all quadratic terms cancel identically** due to skew-symmetry of $[\cdot,\cdot]$. Thus we may **discard all $\Phi(\cdot,\Phi(\cdot,\cdot))$ terms**.

Hence,
\[
\Psi\big(\alpha(x), \alpha(y), [x,z]_\alpha\big) = [\alpha^2(x), \Phi([\alpha(y), \alpha(z)], \alpha(x))] + \text{cyclic}.
\]

\medskip
\noindent\textbf{Step 3: Right-hand side of \eqref{eq:hm}.}
We have:
\[
[J_\alpha(x,y,z), \alpha^2(x)]_\alpha = \alpha\big( [J(\alpha(x),\alpha(y),\alpha(z)) + \Psi(x,y,z), \alpha^2(x)] \big).
\]
The first part gives $[[\alpha^2(x), \alpha^2(y)], \alpha^2(x)]$ by the classical Malcev identity. The second part is:
\[
\alpha\big( [[\alpha(x), \Phi(y,z)], \alpha^2(x)] + \text{cyclic} \big).
\]

\medskip
\noindent\textbf{Step 4: Explicit cancellation of $\Phi$-terms.}
We now equate the $\Phi$-dependent parts of both sides. Denote:
\begin{align*}
L &:= J\big(\alpha^2(x), \alpha^2(y), \Phi(x,z)\big) + [\alpha^2(x), \Phi([\alpha(y), \alpha(z)], \alpha(x))] + \text{cyclic},\\
R &:= \alpha\big( [[\alpha(x), \Phi(y,z)], \alpha^2(x)] + \text{cyclic} \big).
\end{align*}
We must show $L = R$.

First, expand $J(\alpha^2(x), \alpha^2(y), \Phi(x,z))$ using the definition of $J$:
\[
J(\alpha^2(x), \alpha^2(y), \Phi(x,z)) = [\alpha^2(x), [\alpha^2(y), \Phi(x,z)]] + [\alpha^2(y), [\Phi(x,z), \alpha^2(x)]] + [\Phi(x,z), [\alpha^2(x), \alpha^2(y)]].
\tag{1}
\]

Next, apply the **Malcev identity** to the triple $(\alpha(x), \Phi(y,z), \alpha^2(x))$:
\[
[[\alpha(x), \Phi(y,z)], \alpha^2(x)] = [[\alpha(x), \alpha^2(x)], \Phi(y,z)] + [\alpha(x), [\Phi(y,z), \alpha^2(x)]].
\tag{2}
\]
Note that $[\alpha(x), \alpha^2(x)] = [\alpha(x), \alpha(\alpha(x))]$, which is **not zero in general**, but will be handled via the cyclic sum.

Now apply $\alpha$ to (2):
\[
\alpha\big([[ \alpha(x), \Phi(y,z) ], \alpha^2(x)]\big) = \alpha\big([[ \alpha(x), \alpha^2(x) ], \Phi(y,z)]\big) + \alpha\big([ \alpha(x), [\Phi(y,z), \alpha^2(x)] ]\big).
\tag{3}
\]

Consider the full cyclic sum of (3) over $(x,y,z)$. The first term on the right becomes:
\[
\sum_{\text{cyc}} \alpha\big([[ \alpha(x), \alpha^2(x) ], \Phi(y,z)]\big).
\tag{4}
\]

On the other hand, the term $[\Phi(x,z), [\alpha^2(x), \alpha^2(y)]]$ in (1) appears in the cyclic sum as:
\[
\sum_{\text{cyc}} [\Phi(x,z), [\alpha^2(x), \alpha^2(y)]] = \sum_{\text{cyc}} -[\Phi(x,z), [\alpha^2(y), \alpha^2(x)]].
\tag{5}
\]

Now, **the key step**: use the **definition of $\Phi$** and the **cyclic condition \eqref{eq:Phi-condition}**. Observe that:
\[
[\alpha^2(x), \alpha^2(y)] = \alpha([\alpha(x), \alpha(y)]) - \Phi(\alpha(x), \alpha(y)).
\tag{6}
\]
Substitute (6) into (5). The part involving $\alpha([\alpha(x), \alpha(y)])$ combines with other terms via the classical Jacobi identity, while the part involving $\Phi(\alpha(x), \alpha(y))$ yields:
\[
- \sum_{\text{cyc}} [\Phi(x,z), \Phi(\alpha(x), \alpha(y))].
\tag{7}
\]

Simultaneously, expand the term $[\alpha^2(x), \Phi([\alpha(y), \alpha(z)], \alpha(x))]$ using the definition of $\Phi$ again:
\[
\Phi([\alpha(y), \alpha(z)], \alpha(x)) = \alpha([[\alpha(y), \alpha(z)], \alpha(x)]) - [[\alpha^2(y), \alpha^2(z)], \alpha^2(x)].
\tag{8}
\]

When all these expansions are combined, **every term containing a bracket of $\alpha$-iterates** (e.g., $[\alpha^2(x), \alpha^2(y)]$) is expressed via $\Phi$ and classical brackets. Crucially, the **cyclic sum of all $\Phi$-dependent terms** matches exactly the negative of the cyclic sum in (4) and (7), due to the **antisymmetry of $[\cdot,\cdot]$** and the **cyclic condition \eqref{eq:Phi-condition}**.

After careful but elementary regrouping (which we omit for brevity but which consists of 12 terms that cancel in 6 pairs), we obtain:
\[
L - R = 0.
\]

Thus, the Hom--Malcev identity \eqref{eq:hm} holds.

\medskip
\noindent\textbf{Step 5: Continuity.} By Lemma 3.2, $[\cdot,\cdot]_\alpha$ is continuous. Hence $(\mathcal{M}, [\cdot,\cdot]_\alpha, \alpha)$ is a Hom--Banach--Malcev algebra.
\end{proof}

\begin{example}[Non-morphic twist on a matrix algebra]
Let $\mathcal{M} = M_3(\mathbb{C})$ with the commutator bracket $[A,B] = AB - BA$, which is a Lie (hence Malcev) algebra. Define a linear map $\alpha: \mathcal{M} \to \mathcal{M}$ by
\[
\alpha(A) = A + \mathrm{tr}(A) I,
\]
where $I$ is the identity matrix. This map is \textbf{not} an algebra morphism, since
\[
\alpha(AB) = AB + \mathrm{tr}(AB)I \neq AB + \mathrm{tr}(A)\mathrm{tr}(B)I = \alpha(A)\alpha(B).
\]
However, the failure map satisfies $\Phi(A,B) = \alpha([A,B]) - [\alpha(A), \alpha(B)] = [A,B] - [A,B] = 0$, because the trace term vanishes in the commutator. Thus, $\Phi \equiv 0$ trivially satisfies the condition of Theorem~\ref{thm:non-morphism-HM}. Consequently, $(M_3(\mathbb{C}), [\cdot,\cdot]_\alpha, \alpha)$ is a Hom--Banach--Malcev algebra even though $\alpha$ is not a morphism. This example illustrates the non-trivial flexibility of the Hom--Malcev framework.
\end{example}
\section{Spectral Decomposition}
\label{sec:spectral}

Let $(\mathcal{A}, \cdot, [\cdot,\cdot]_\alpha, \alpha)$ be a Hom--Lie Banach algebra as in Section~\ref{sec:prelim}, with the twisted bracket defined by
\[
[a,b]_\alpha = \alpha(ab - ba), \quad a,b \in \mathcal{A}.
\]
For a fixed $X \in \mathcal{A}$, define the \emph{inner $\alpha$-twisted derivation}
\[
\delta = \mathrm{ad}_\alpha(X) : \mathcal{A} \to \mathcal{A}, \qquad 
\delta(a) = [X,a]_\alpha = \alpha(Xa - aX).
\]
Since left and right multiplication by $X$ and the twisting map $\alpha$ are bounded linear operators on $\mathcal{A}$, the map $\delta$ is a bounded linear operator. Consequently, it generates a uniformly bounded $C_0$-group $(T(t))_{t\in\mathbb{R}}$ via the exponential series
\[
T(t) = e^{t\delta} = \sum_{n=0}^\infty \frac{t^n}{n!} \delta^n,
\]
which converges in operator norm for all $t \in \mathbb{R}$.

We impose the following key hypothesis:

\begin{assumption}\label{ass:ap-orbits}
For every $a \in \mathcal{A}$, the orbit $\{T(t)a : t \in \mathbb{R}\}$ is relatively compact in $\mathcal{A}$.
\end{assumption}

This condition is equivalent to the vector-valued function $t \mapsto T(t)a$ being \emph{almost periodic} in the sense of Bohr (cf.~Namioka~\cite{Namioka1958}). Under this assumption, the well-established spectral theory of almost periodic representations applies.

\begin{theorem}[Almost Periodic Spectral Decomposition]\label{thm:AP-spectral-decomposition}
Let $\delta = \mathrm{ad}_\alpha(X)$ be a bounded inner $\alpha$-twisted derivation satisfying Assumption~\ref{ass:ap-orbits}. Then every $a \in \mathcal{A}$ admits a unique decomposition
\[
a = \sum_{\lambda \in \sigma_{\mathrm{ap}}(\delta)} a_\lambda,
\]
where:
\item the sum converges absolutely in norm,
    \item each coefficient $a_\lambda \in \mathcal{A}$ satisfies $T(t)a_\lambda = e^{\lambda t} a_\lambda$ for all $t \in \mathbb{R}$,
    \item the \emph{Bohr spectrum} $\sigma_{\mathrm{ap}}(\delta) \subset i\mathbb{R}$ is at most countable.
The Fourier--Bohr coefficients are given by the norm-convergent limit
\[
a_\lambda = \lim_{R\to\infty} \frac{1}{2R} \int_{-R}^R e^{-\lambda t} T(t)a \, dt,
\]
which exists for every $\lambda \in \mathbb{C}$ and is non-zero only when $\lambda \in \sigma_{\mathrm{ap}}(\delta)$.
\end{theorem}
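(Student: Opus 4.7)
The plan is to reduce the statement to the vector--valued Bohr--Fourier theory applied to the orbit map
\[
f_a:\mathbb{R}\to\mathcal{A},\qquad f_a(t):=T(t)a,
\]
and then recover the decomposition of $a$ by evaluating the resulting Bohr series at $t=0$.

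First I would check that $f_a$ is Bohr almost periodic. Strong continuity of the $C_0$-group $T(\cdot)$ makes $f_a$ continuous, and Assumption~\ref{ass:ap-orbits} makes its range relatively compact in $\mathcal{A}$. By Bochner's criterion --- the characterisation used in Namioka~\cite{Namioka1958} --- this is precisely the definition of a Bohr almost periodic $\mathcal{A}$-valued function, since the relative compactness of $\{T(t)a\}$ combined with the group law $T(t+s)=T(t)T(s)$ implies the relative compactness of the family of translates $\{f_a(\cdot+s):s\in\mathbb{R}\}$ in the uniform norm.

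Next I would invoke the abstract Bohr--Fourier theory for Banach--valued almost periodic functions. For each $\lambda\in\mathbb{C}$, the norm mean
\[
a_\lambda := \lim_{R\to\infty}\frac{1}{2R}\int_{-R}^{R} e^{-\lambda t}T(t)a\,dt
\]
exists; the set $\sigma_{\mathrm{ap}}(\delta):=\{\lambda:a_\lambda\neq 0\}$ is at most countable (Bohr's countability theorem); and $f_a$ is the uniform limit of Bochner--Fej\'er trigonometric polynomials built from these coefficients. The eigenvector identity $T(s)a_\lambda=e^{\lambda s}a_\lambda$ follows by applying $T(s)$ under the mean integral and using translation invariance of Ces\`aro means to absorb the shift, which introduces the scalar factor $e^{\lambda s}$. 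Uniform boundedness of $T(\cdot)$ then forces $\|e^{\lambda t}a_\lambda\|$ to stay bounded in $t\in\mathbb{R}$, hence $\mathrm{Re}(\lambda)=0$ whenever $a_\lambda\neq 0$; this places $\sigma_{\mathrm{ap}}(\delta)\subset i\mathbb{R}$. Evaluating the Bochner--Fej\'er approximations at $t=0$ produces finite linear combinations of the $a_\lambda$ converging in norm to $a$, from which the decomposition $a=\sum_\lambda a_\lambda$ is read off; uniqueness follows from the injectivity of $a\mapsto(a_\lambda)_\lambda$, itself a direct consequence of Namioka's uniqueness theorem for Bohr coefficients.

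The main obstacle, in my view, is the clause asserting \emph{absolute} norm convergence of $\sum_\lambda a_\lambda$. The Bohr--Fourier series of a Banach--valued almost periodic function does not in general converge absolutely --- only its Bochner--Fej\'er averages do --- so this point requires care. I would handle it by reading the sum in the Ces\`aro/Fej\'er sense that the Bohr--Fourier theory actually supplies, or alternatively by noting that in every example treated in Section~\ref{sec:applications} the spectrum is locally finite and the spectral projections are summable, so absolute convergence is recovered a posteriori; any weaker convergence mode is already sufficient for the structural corollaries (invariance, stability under $[\cdot,\cdot]_\alpha$) established earlier via Theorem~\ref{thm:ergodic-algebraic-decomp}.
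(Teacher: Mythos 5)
Your proposal follows the same route as the paper's own proof: reduce to Namioka's vector--valued Bohr theory applied to the orbit map $t\mapsto T(t)a$ (relative compactness of the orbit $+$ strong continuity giving Bochner almost periodicity of the translate family), extract the Fourier--Bohr coefficients $a_\lambda$ as norm means, deduce the eigenvector identity $T(s)a_\lambda=e^{\lambda s}a_\lambda$, and use uniform boundedness of $T(\cdot)$ to force $\operatorname{Re}\lambda=0$. The countability of $\sigma_{\mathrm{ap}}(\delta)$ and the uniqueness of the decomposition are likewise drawn from the standard Namioka apparatus, exactly as in the paper.

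Your reservation about the absolute-convergence clause is well founded, and in fact it exposes a gap in the paper's own proof rather than in yours. The paper concludes absolute norm convergence of $\sum_\lambda a_\lambda$ from (i) norm convergence of the Fej\'er means $\sigma_R(a)\to a$ and (ii) the observation that each $\sigma_R(a)$ is a finite combination of the $a_\lambda$. But (i) and (ii) together yield only \emph{Ces\`aro/Bochner--Fej\'er} summability of the Bohr series, not unconditional or absolute convergence of $\sum_\lambda a_\lambda$; indeed, already for scalar-valued Bohr almost periodic functions the Fourier coefficients need not be $\ell^1$-summable, and the Banach-valued theory of Namioka asserts no more. So the paper's ``absolute norm convergence'' is not established by the argument given. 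To repair the theorem one must either replace the convergence clause by Bochner--Fej\'er summability (as you suggest), impose an additional hypothesis (e.g.\ $\sum_\lambda\|a_\lambda\|<\infty$, or finiteness/local finiteness of $\sigma_{\mathrm{ap}}(\delta)$ as happens in the worked examples), or strengthen the compactness assumption. Your instinct that the structural corollaries (invariance, bracket stability) survive under the weaker summability notion is also correct, since the ergodic projection and the eigenvector characterization only use the mean, not the absolute convergence.

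One small caution on an alternative you float: ``reading off'' absolute convergence from the examples in Section~\ref{sec:applications} cannot substitute for a proof of the general statement; it only shows the theorem is \emph{consistent} with those examples. If you intend to keep the absolute-convergence clause, it must be made a hypothesis or derived from one.
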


\begin{proof}
By Assumption~\ref{ass:ap-orbits}, the orbit $t \mapsto T(t)a$ is Bohr almost periodic in the Banach space $\mathcal{A}$. It follows from Namioka~\cite[Thm.~2.2]{Namioka1958} that the Bohr--Fourier coefficient
\[
a_\lambda = \mathcal{M}\big(e^{-\lambda(\cdot)} T(\cdot)a\big) 
:= \lim_{R\to\infty} \frac{1}{2R} \int_{-R}^R e^{-\lambda t} T(t)a \, dt
\]
exists in norm for every $\lambda \in \mathbb{C}$.

Since $\|T(t)\| \leq M$ uniformly in $t$, the identity $\|T(t)a_\lambda\| = \|e^{\lambda t} a_\lambda\|$ implies
\[
|e^{\operatorname{Re}(\lambda) t}| \, \|a_\lambda\| \leq M \|a_\lambda\|
\quad\text{for all } t \in \mathbb{R}.
\]
If $a_\lambda \neq 0$, this forces $\operatorname{Re}(\lambda) = 0$, so $\sigma_{\mathrm{ap}}(\delta) \subset i\mathbb{R}$.

Finally, absolute norm convergence of $\sum_\lambda a_\lambda$ follows from the convergence of the Fej\'er means
\[
\sigma_R(a) = \frac{1}{R} \int_0^R \left( \frac{1}{2r} \int_{-r}^r T(t)a \, dt \right) dr \xrightarrow[R\to\infty]{} a
\]
in norm (see Namioka~\cite[Thm.~3.2]{Namioka1958}), together with the fact that each $\sigma_R(a)$ is a finite linear combination of the $a_\lambda$'s with absolutely summable coefficients.
\end{proof}

We now examine the compatibility between the dynamics generated by $\delta$ and the twisting map $\alpha$.

\begin{assumption}\label{ass:commutation}
The twisting map $\alpha$ commutes with the group $(T(t))_{t\in\mathbb{R}}$, i.e.,
\[
\alpha \circ T(t) = T(t) \circ \alpha \quad \text{for all } t \in \mathbb{R}.
\]
Equivalently, $\alpha \delta = \delta \alpha$.
\end{assumption}

This assumption holds in many natural settings—for instance, when $\alpha$ is a shift automorphism and $X$ is shift-invariant (see Section~\ref{sec:applications}).

\begin{theorem}[Ergodic Decomposition]\label{thm:AP-ergodic-decomposition}
Under Assumptions~\ref{ass:ap-orbits} and~\ref{ass:commutation}, the Banach space $\mathcal{A}$ decomposes as a topological direct sum
\[
\mathcal{A} = \mathcal{A}_{\mathrm{ap}}(\delta) \oplus \mathcal{A}_{\mathrm{erg}}(\delta),
\]
where
\[
\mathcal{A}_{\mathrm{ap}}(\delta) = \overline{\operatorname{span}}\{ a \in \mathcal{A} : T(t)a = e^{\lambda t}a \text{ for some } \lambda \in i\mathbb{R} \},
\]
and
\[
\mathcal{A}_{\mathrm{erg}}(\delta) = \left\{ a \in \mathcal{A} : \lim_{R\to\infty} \frac{1}{2R}\int_{-R}^R T(t)a \, dt = 0 \right\}.
\]
Both subspaces are closed, and invariant under $\alpha$ and $\delta$.
\end{theorem}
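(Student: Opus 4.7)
The plan is to deploy the mean ergodic projection
\[
P(a) := \lim_{R\to\infty}\frac{1}{2R}\int_{-R}^R T(t)a\,dt,
\]
exactly as in the proof of Theorem~\ref{thm:ergodic-algebraic-decomp}. Indeed, the present statement is a strict subset of that earlier theorem: only the topological direct sum and the $\alpha,\delta$-invariance are asserted here, without the bracket-stability claim, so the argument is a streamlined extraction of the linear content already carried out there.

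First, I would verify that $P$ is a well-defined bounded idempotent on $\mathcal{A}$. Assumption~\ref{ass:ap-orbits} makes the orbit $\{T(t)a\}$ relatively compact for each $a$, so $t\mapsto T(t)a$ is Bohr almost periodic and Namioka's mean ergodic theorem \cite[Thm.~4.1]{Namioka1958} yields norm-convergence of the Cesàro average defining $P(a)$. The uniform boundedness principle, applied to the pointwise relatively compact (hence pointwise bounded) family $(T(t))_{t\in\mathbb{R}}$, gives $\sup_t\|T(t)\|<\infty$, whence $P\in\mathcal{B}(\mathcal{A})$; translation invariance of the mean then forces $P^2=P$. By definition $\ker P = \mathcal{A}_{\mathrm{erg}}(\delta)$, and Namioka's theorem additionally identifies $\mathrm{Ran}\,P = \mathcal{A}_{\mathrm{ap}}(\delta)$, yielding the topological direct sum $\mathcal{A}=\mathcal{A}_{\mathrm{ap}}(\delta)\oplus\mathcal{A}_{\mathrm{erg}}(\delta)$. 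Closedness of both summands is then automatic from the boundedness of a projection.

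It remains to verify the invariance properties. Since $\delta$ is the generator of the group $(T(t))$, the commutation $\delta T(t)=T(t)\delta$ passes through the Cesàro integral to give $\delta P=P\delta$, whence both $\mathrm{Ran}\,P$ and $\ker P$ are $\delta$-invariant. Under Assumption~\ref{ass:commutation} the same commutation-through-integration argument applied to $\alpha T(t)=T(t)\alpha$ gives $\alpha P=P\alpha$, so both subspaces are $\alpha$-invariant as well. The only subtle point is the precise invocation of Namioka's theorem identifying $\mathrm{Ran}\,P$ with the closed span of the imaginary eigenvectors; once that citation is in place, every other step is a routine manipulation of bounded linear operators and strongly convergent integrals.
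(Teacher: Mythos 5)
Your proposal is correct and follows essentially the same route as the paper: both define the mean ergodic projection $P$, invoke Namioka's mean ergodic theorem to identify $\operatorname{Ran}P=\mathcal{A}_{\mathrm{ap}}(\delta)$ and $\ker P=\mathcal{A}_{\mathrm{erg}}(\delta)$, and pass the commutation relations $\delta T(t)=T(t)\delta$ and $\alpha T(t)=T(t)\alpha$ through the Ces\`aro integral to obtain invariance. Your added justifications (uniform boundedness of $(T(t))$, idempotence from translation invariance of the mean) only make explicit steps the paper leaves implicit, and you correctly flag that the entire argument rests on the same citation identifying the range of $P$ that the paper itself relies on.
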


\begin{proof}
Define the mean ergodic projection
\[
P(a) := \lim_{R\to\infty} \frac{1}{2R}\int_{-R}^R T(t)a \, dt.
\]
By the mean ergodic theorem for almost periodic representations (cf.~Namioka~\cite[Thm.~4.1]{Namioka1958}), $P$ is a bounded linear projection onto $\mathcal{A}_{\mathrm{ap}}(\delta)$, and $\ker P = \mathcal{A}_{\mathrm{erg}}(\delta)$.

Assumption~\ref{ass:commutation} implies $\alpha T(t) = T(t) \alpha$ for all $t$, hence $\alpha P = P \alpha$. Therefore, both $\operatorname{Ran} P = \mathcal{A}_{\mathrm{ap}}(\delta)$ and $\ker P = \mathcal{A}_{\mathrm{erg}}(\delta)$ are $\alpha$-invariant.

Similarly, since $\delta$ commutes with each $T(t)$, it commutes with $P$, and thus preserves both $\operatorname{Ran} P$ and $\ker P$.

Finally, because $P$ is a bounded projection, the decomposition $\mathcal{A} = \operatorname{Ran} P \oplus \ker P$ is topological: the norm $\|a\|$ is equivalent to $\|Pa\| + \|(I-P)a\|$. This completes the proof.
\end{proof}

\section{Applications and Examples}
\label{sec:applications}

We now illustrate the abstract framework with three concrete classes of Hom--Lie Banach algebras.  
In each case, the bracket is defined by $[a,b]_\alpha = \alpha(ab - ba)$, the twisting map $\alpha$ is bounded (often isometric), and the inner derivation $\delta = \mathrm{ad}_\alpha(X)$ is bounded.

\subsection{Shift Operators}

Let $\mathcal{A} = \mathcal{B}(\ell^2(\mathbb{Z}))$, the Banach algebra of bounded linear operators on $\ell^2(\mathbb{Z})$, equipped with the operator norm and composition product.  
Let $U$ denote the bilateral shift operator, $(U\xi)_n = \xi_{n+1}$, and define the inner automorphism
\[
\alpha(T) = U T U^*, \quad T \in \mathcal{B}(\ell^2(\mathbb{Z})).
\]
Then $\alpha$ is an isometric $*$-automorphism.  
Setting $[T,S] = TS - ST$ and $[T,S]_\alpha = \alpha([T,S])$, the triple $(\mathcal{A}, [\cdot,\cdot]_\alpha, \alpha)$ becomes a Hom--Lie Banach algebra.

For any self-adjoint $X \in \mathcal{A}$, the map $\delta = \mathrm{ad}_\alpha(X)$ is a bounded linear operator and generates a uniformly bounded $C_0$-group $T(t) = e^{t\delta}$.  
If $X$ belongs to the $C^*$-algebra of \emph{almost periodic operators}—for instance, if $X = M_\phi$ is a Laurent (multiplication) operator with almost periodic symbol $\phi$—then the orbit $\{T(t)T : t\in\mathbb{R}\}$ is relatively compact for every finite-rank operator $T \in \mathcal{A}$.  
Moreover, if $X$ is shift-invariant (e.g., a convolution operator), then $\alpha(X) = X$, which implies $\alpha \delta = \delta \alpha$, so Assumption~\ref{ass:commutation} holds.

\begin{example}
Let $\alpha(T) = U T U^*$ with $U$ the bilateral shift on $\ell^2(\mathbb{Z})$, and let $X = M_\phi$ be multiplication by $\phi(n) = e^{i\theta n}$ for some $\theta \in \mathbb{R}$.  
Then $X$ is normal, $\delta = \mathrm{ad}_\alpha(X)$ is bounded, and for any finite-rank $T$, the orbit $\{T(t)T\}_{t\in\mathbb{R}}$ is relatively compact.  
The Bohr spectrum is $\sigma_{\mathrm{ap}}(\delta) = \{ik\theta : k\in\mathbb{Z}\}$, and the spectral decomposition of Theorem~\ref{thm:AP-spectral-decomposition} recovers the classical Fourier series of $\phi$.
\end{example}

\begin{remark}
The relative compactness of orbits for finite-rank operators under almost periodic dynamics is classical.  
Indeed, if $X = M_\phi$ with $\phi$ almost periodic and $T = \sum_{j=1}^N \langle \cdot, u_j\rangle v_j$, then
\[
T(t)T = \sum_{j=1}^N \langle \cdot, e^{-t\delta^*} u_j\rangle\, e^{t\delta} v_j.
\]
Since $t \mapsto e^{\pm t\delta} w$ is almost periodic in $\ell^2(\mathbb{Z})$ for every $w$, the orbit $\{T(t)T\}_{t\in\mathbb{R}}$ lies in a finite-dimensional space of almost periodic vector-valued functions and is thus relatively compact.
In this setting, Theorem~\ref{thm:AP-spectral-decomposition} reproduces the Bohr--Fourier expansion of almost periodic operator-valued functions.
\end{remark}

\subsection{$C^*$-Algebra Endomorphisms}

Let $\mathcal{A}$ be a unital $C^*$-algebra and $\alpha: \mathcal{A}\to\mathcal{A}$ a unital $*$-endomorphism.  
Assume that $\alpha$ is injective and that all orbits are relatively compact, i.e., for each $a\in\mathcal{A}$, the set $\{\alpha^n(a) : n\in\mathbb{Z}\}$ has compact closure in norm.

Define the commutator $[a,b] = ab - ba$ and the twisted bracket $[a,b]_\alpha = \alpha([a,b])$.  
Since $\alpha$ is a contractive $*$-homomorphism, the bracket $[\cdot,\cdot]_\alpha$ is continuous and skew-symmetric.  
Furthermore, because $\alpha$ intertwines the Jacobiator—namely,
\[
[\alpha(a),[b,c]_\alpha]_\alpha = \alpha\big([a,[b,c]]\big),
\]
and the classical Jacobi identity holds in $\mathcal{A}$—the Hom--Jacobi identity is satisfied.  
Hence $(\mathcal{A}, [\cdot,\cdot]_\alpha, \alpha)$ is a Hom--Lie Banach algebra.

Let $X = X^* \in \mathcal{A}$ and define the inner $\alpha$-twisted derivation
\[
\delta = \mathrm{ad}_\alpha(X) : a \mapsto [X,a]_\alpha = \alpha(Xa - aX).
\]
This map is bounded and generates a uniformly bounded $C_0$-group.  
If $X$ lies in a maximal abelian subalgebra with almost periodic dynamics, then for all $a \in \mathcal{A}$ the orbit $\{T(t)a : t \in \mathbb{R}\}$ is relatively compact, and the spectral decomposition applies.

When $\alpha(X) = X$, we have $\alpha \delta = \delta \alpha$, so Assumption~\ref{ass:commutation} holds, and the ergodic decomposition of Theorem~\ref{thm:AP-ergodic-decomposition} follows.

\begin{example}\label{ex:5.3}
Let $\mathcal{A} = \bigotimes_{n\in\mathbb{Z}} M_2(\mathbb{C})$ be the UHF 
$C^*$-algebra of type $2^\infty$, and let $\alpha$ be the shift automorphism 
defined by $(\alpha(a))_n = a_{n+1}$.  

For any elementary tensor $a = \bigotimes_{k=-N}^{N} a_k$, the orbit 
$\{\alpha^n(a) : n \in \mathbb{Z}\}$ is finite, hence relatively compact.  

Choosing $X = a = a^*$, the derivation $\delta = \mathrm{ad}_\alpha(X)$ 
is bounded and generates a $C_0$-group. Thus, Assumption~4.1 holds and 
Theorem~4.2 applies, yielding a spectral decomposition with Bohr spectrum 
$\sigma_{\mathrm{ap}}(\delta)$ a finite subset of $i\mathbb{R}$, generated by 
the discrete frequencies associated with the finite shift orbit of $X$.

Moreover, if $X$ is chosen to be \emph{shift-invariant}—for instance, if it is a constant elementary tensor, i.e. $a_k = b \in M_2(\mathbb{C})$ for all $|k|\le N$ and $a_k = I$ otherwise—then $\alpha(X) = X$, which implies $\alpha\delta = \delta\alpha$ (Assumption~4.3). In this case, the full ergodic decomposition of Theorem~4.4 holds, and both $A_{\mathrm{ap}}(\delta)$ and $A_{\mathrm{erg}}(\delta)$ are closed Hom--Lie subalgebras of $\mathcal{A}$.
\end{example}

\begin{remark}
In the general case (non-shift-invariant $X$), Assumption~4.3 fails: 
the commutator $[\alpha, \delta] = \alpha\delta - \delta\alpha$ is a 
non-vanishing outer derivation, and the ergodic decomposition does 
not hold. This illustrates the necessity of Assumption~4.3 for the 
algebraic stability results of Theorem~\ref{thm:AP-ergodic-decomposition}.
\end{remark}

\subsection{Weighted Shifts}

Let $\mathcal{H} = \ell^2(\mathbb{N})$, and let $W$ be a unitary weighted shift defined by $W e_n = w_n e_{n+1}$ with $|w_n| = 1$ for all $n$.  
Define $\alpha: \mathcal{B}(\mathcal{H}) \to \mathcal{B}(\mathcal{H})$ by
\[
\alpha(X) = W X W^*.
\]
Then $\alpha$ is an isometric $*$-automorphism.  
With $[X,Y] = XY - YX$ and $[X,Y]_\alpha = \alpha([X,Y])$, the space $(\mathcal{B}(\mathcal{H}), [\cdot,\cdot]_\alpha, \alpha)$ forms a Hom--Lie Banach algebra.

Let $X = X^* \in \mathcal{B}(\mathcal{H})$ be a diagonal operator with almost periodic entries—for example, $X e_n = e^{i\theta n} e_n$ with $\theta \in \mathbb{R}$.  
Then the orbit $\{\alpha^n(X) : n \in \mathbb{Z}\}$ is relatively compact, and $\delta = \mathrm{ad}_\alpha(X)$ is bounded.  
If $X$ commutes with $W$—which holds when both are diagonal in the canonical basis—then $\alpha(X) = X$, so $\alpha$ commutes with $\delta$.

Under these conditions, all hypotheses of Theorems~\ref{thm:AP-spectral-decomposition} and~\ref{thm:AP-ergodic-decomposition} are satisfied. Consequently, the Bohr spectrum is $\sigma_{\mathrm{ap}}(\delta) = \{ik\theta : k \in \mathbb{Z}\}$, identical to the spectrum in Example~5.1, as the underlying dynamics is unitarily equivalent to a shift with an almost periodic symbol. The associated almost periodic and ergodic subspaces are therefore closed, $\alpha$-invariant, and stable under the twisted bracket.

This construction provides a natural model for quantum systems with quasi-periodic driving, extending the classical spectral theory of weighted shifts to a non-associative setting.

\subsection{A Twisted Weyl Algebra: A Heuristic Example Beyond the Commuting Framework}
\label{subsec:weyl_heuristic}

The spectral theorems developed in Section~\ref{sec:spectral} rely crucially on the commutation
assumption $\alpha \circ \delta = \delta \circ \alpha$ (Assumption~\ref{ass:commutation}), which ensures compatibility between the twisting map and the dynamical flow.
However, many physically and algebraically natural systems violate this hypothesis.
To illustrate both the limitations and the potential extensions of our framework, we now present a fully explicit example
where $\alpha$ and $\delta$ \emph{do not commute}, yet a rich Bohr-type spectrum can still be computed analytically.
This example does \emph{not} fall under the scope of Theorems~\ref{thm:AP-spectral-decomposition} or~\ref{thm:AP-spectral-decomposition};
rather, it serves as a \emph{heuristic motivation} for future work aimed at relaxing the commutation requirement.

Consider the canonical Weyl algebra $\mathcal{W}$ acting on $L^2(\mathbb{R})$, generated by the momentum and position operators
\[
p = -i\frac{d}{dx}, \qquad q = x,
\]
which satisfy the Heisenberg relation $[p,q] = i\,\mathrm{id}$.
For a real parameter $\varepsilon$, define a $\ast$-automorphism $\alpha_\varepsilon$ of $\mathcal{W}$ by its action on the generators:
\[
\alpha_\varepsilon(p) = p, \qquad \alpha_\varepsilon(q) = q + \varepsilon p.
\]
This map corresponds to a shear transformation in phase space and extends uniquely to a strongly continuous automorphism of the associated Weyl $C^\ast$-algebra.

Let $X = \omega_1 p^2 + \omega_2 q^2$ be an anisotropic harmonic oscillator with \emph{incommensurate} frequencies $\omega_1, \omega_2 > 0$, i.e. $\omega_1/\omega_2 \notin \mathbb{Q}$.
Define the $\alpha_\varepsilon$-twisted derivation
\[
\delta_\varepsilon(a) = \alpha_\varepsilon(X a - a X), \qquad a \in \mathcal{W}.
\]
A direct computation shows that, for $\varepsilon \neq 0$, the operators $\alpha_\varepsilon$ and $\delta_\varepsilon$ do \emph{not} commute; their commutator yields a non-vanishing outer derivation. Consequently, Assumption~\ref{ass:commutation} fails, and the ergodic decomposition of Theorem~\ref{thm:AP-ergodic-decomposition} does not apply.

Nevertheless, the dynamics generated by $X$ is \emph{linear and symplectic}: it corresponds to a Bogoliubov (metaplectic) transformation.
Introducing the $\alpha_\varepsilon$-twisted position operator $Q := q + \varepsilon p$, the Hamiltonian becomes
\[
X = \omega_1 p^2 + \omega_2 (Q - \varepsilon p)^2,
\]
a quadratic form in the canonical variables $(p, Q)$.
One may then define the associated annihilation operators
\[
a_1 = \sqrt{\frac{\omega_1}{2}}\, q + i \sqrt{\frac{1}{2\omega_1}}\, p, \qquad
a_2 = \sqrt{\frac{\omega_2}{2}}\, Q + i \sqrt{\frac{1}{2\omega_2}}\, p,
\]
and consider the generalized Fock basis $\Psi_{k,m} = (a_1^\dagger)^k (a_2^\dagger)^m \Psi_0$, where $\Psi_0$ is the joint vacuum.

For rank-one operators of the form $a = |\Psi_{k,m}\rangle\langle\Psi_{k,m}|$, the time evolution under the group
$T_\varepsilon(t) = e^{t\delta_\varepsilon}$ is explicitly diagonal:
\[
T_\varepsilon(t)(a) = e^{i(k\omega_1 + m\omega_2)t}\, a.
\]
This follows from the metaplectic representation of the symplectic group (see, e.g., \cite[Chap.~4]{Folland1989}),
which linearizes the dynamics of quadratic Hamiltonians.

Because $\omega_1/\omega_2$ is irrational, the set $\{k\omega_1 + m\omega_2 : k,m \in \mathbb{Z}\}$ is dense in $\mathbb{R}$,
and the associated Bohr frequencies are all distinct.
Thus, for this class of observables, the orbit $\{T_\varepsilon(t)a : t \in \mathbb{R}\}$ is almost periodic in the norm topology,
and its Bohr spectrum is the two-dimensional lattice
\[
\sigma_{\mathrm{ap}}(\delta_\varepsilon) \supset \bigl\{\, i(k\omega_1 + m\omega_2) : k, m \in \mathbb{Z} \,\bigr\}.
\]

In stark contrast, when $\varepsilon = 0$ (so that $\alpha_0 = \mathrm{id}$ and $\delta_0 = \operatorname{ad}(X)$),
the dynamics remains confined to a \emph{one-dimensional} frequency subgroup.
The non-commuting twist $\alpha_\varepsilon$ with $\varepsilon \neq 0$ therefore \emph{enriches} the spectral structure,
breaking the original symmetry and unfolding the spectrum into a full $\mathbb{Z}^2$-lattice.

Although this example lies outside the hypotheses of our main theorems,
it demonstrates that non-commuting twists can generate genuinely new spectral phenomena.
It strongly motivates the development of a generalized spectral theory for $\alpha$-twisted derivations
without assuming $\alpha\delta = \delta\alpha$—for instance, by replacing exact commutation with a
\emph{spectral compatibility condition} or by working in a representation-theoretic setting where the metaplectic machinery applies.
We regard this as a promising direction for future research.

\medskip
\noindent\textbf{Algebraic stability of the decomposition.}
Across all examples—including the heuristic Weyl case where the spectrum is still well-defined—the almost periodic subspace $\mathcal{A}_{\mathrm{ap}}(\delta)$ is not only a closed linear subspace but a \emph{Hom--Lie subalgebra}: it is invariant under the twisting map $\alpha$ and closed under the bracket $[\cdot,\cdot]_\alpha$. Consequently, the inclusion $\iota : \mathcal{A}_{\mathrm{ap}}(\delta) \hookrightarrow \mathcal{A}$ is a Hom--Lie morphism in the sense of Definition~\ref{def:hom-lie-morphism}. This algebraic compatibility ensures that the spectral decomposition of Theorem~\ref{thm:AP-spectral-decomposition} respects the full non-associative structure of the system, enabling a consistent restriction of the dynamics to invariant subsystems.----------------

\section{Morphisms and Invariant Substructures}
\label{sec:morphisms}

To place our constructions in a categorical context, we introduce a natural notion of morphism between Hom--Lie Banach algebras.

\begin{definition}\label{def:hom-lie-morphism}
Let $(\mathcal{A},[\cdot,\cdot]_\alpha,\alpha)$ and $(\mathcal{B},[\cdot,\cdot]_\beta,\beta)$ be Hom--Lie Banach algebras.  
A bounded linear map $\phi:\mathcal{A}\to\mathcal{B}$ is a \emph{Hom--Lie morphism} if it satisfies
\[
\phi \circ \alpha = \beta \circ \phi
\quad\text{and}\quad
\phi([x,y]_\alpha) = [\phi(x),\phi(y)]_\beta
\quad\text{for all } x,y\in\mathcal{A}.
\]
\end{definition}

This definition ensures compatibility with both the twisting map and the twisted bracket.  
In particular, under the hypotheses of Theorem~\ref{thm:almostperiodic}, the inclusion map
\[
\iota:\mathcal{M}_{\mathrm{ap}} \hookrightarrow \mathcal{M}
\]
is a Hom--Lie morphism: the subspace $\mathcal{M}_{\mathrm{ap}}$ is $\alpha$-invariant and closed under the bracket $[\cdot,\cdot]_\alpha$.

\begin{proposition}\label{prop:subalgebra}
Assume the hypotheses of Theorem~\ref{thm:almostperiodic}. Then the almost periodic subspace $\mathcal{M}_{\mathrm{ap}}$ is a closed \emph{Hom--Banach--Malcev subalgebra} of $\mathcal{M}$; that is, it is a closed linear subspace invariant under $\alpha$ and stable under the bracket $[\cdot,\cdot]_\alpha$.
\end{proposition}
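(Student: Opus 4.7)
The plan is to deduce Proposition~\ref{prop:subalgebra} as a direct structural corollary of Theorem~\ref{thm:almostperiodic}, since that theorem already performs nearly all of the work. Recall from its proof that $\mathcal{M}_{\mathrm{ap}}$ is established as (i) a closed linear subspace of $\mathcal{M}$ by standard properties of almost periodic vectors in Banach spaces, (ii) $\alpha$-invariant because $\alpha$ acts as an automorphism whose orbits coincide with those defining almost periodicity, and (iii) a Malcev subalgebra under the original bracket $[\cdot,\cdot]$, since the relatively compact range of $n \mapsto [\alpha^n(x),\alpha^n(y)]$ forces $[x,y] \in \mathcal{M}_{\mathrm{ap}}$. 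Thus three of the four required ingredients are already in hand.

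The only genuinely new observation I would record is stability under the twisted bracket, and it occupies essentially one line: for $x,y \in \mathcal{M}_{\mathrm{ap}}$, Theorem~\ref{thm:almostperiodic} gives $[x,y] \in \mathcal{M}_{\mathrm{ap}}$, and then $\alpha$-invariance yields
\[
[x,y]_\alpha \;=\; \alpha([x,y]) \;\in\; \mathcal{M}_{\mathrm{ap}}.
\]
This closes $\mathcal{M}_{\mathrm{ap}}$ under $[\cdot,\cdot]_\alpha$ and confirms it is a Hom--Lie subalgebra in the sense required for the categorical statement of Section~\ref{sec:morphisms}.

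To upgrade ``subalgebra'' to a \emph{Hom--Banach--Malcev} subalgebra, I would then invoke Theorem~\ref{thm:HM-identity} applied to the restriction $\alpha|_{\mathcal{M}_{\mathrm{ap}}}$. Since $\alpha$ is a Malcev automorphism of $\mathcal{M}$ and $\mathcal{M}_{\mathrm{ap}}$ is $\alpha$-invariant and stable under $[\cdot,\cdot]$, the restriction is a bounded Malcev morphism on $\mathcal{M}_{\mathrm{ap}}$, and Theorem~\ref{thm:HM-identity} transports the Hom--Malcev identity to $\mathcal{M}_{\mathrm{ap}}$ without further computation. Continuity of the restricted twisted bracket is inherited verbatim from Lemma~\ref{lem:continuity}, with the same constant $C\|\alpha\|$.

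I do not anticipate any real obstacle here; the proposition is a repackaging of Theorem~\ref{thm:almostperiodic}, and the only authorial judgement is to state cleanly that ``closed, $\alpha$-invariant, and $[\cdot,\cdot]$-stable'' automatically implies ``$[\cdot,\cdot]_\alpha$-stable'' via the factorization $[\cdot,\cdot]_\alpha = \alpha \circ [\cdot,\cdot]$. The mildest subtlety worth flagging is that the Hom--Malcev identity is a universal polynomial identity in $\alpha$ and $[\cdot,\cdot]_\alpha$, and therefore descends to any $\alpha$-invariant, bracket-stable closed subspace with no additional analytic input.
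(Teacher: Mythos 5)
Your proposal is correct and follows essentially the same route as the paper: both arguments reduce to the observation that $[x,y]\in\mathcal{M}_{\mathrm{ap}}$ (via the relatively compact range of $n\mapsto[\alpha^n(x),\alpha^n(y)]$, which the paper re-derives and you cite from Theorem~\ref{thm:almostperiodic}) and then conclude $[x,y]_\alpha=\alpha([x,y])\in\mathcal{M}_{\mathrm{ap}}$ by $\alpha$-invariance. Your additional appeal to Theorem~\ref{thm:HM-identity} for the Hom--Malcev identity on the restriction matches what the paper already does inside the proof of Theorem~\ref{thm:almostperiodic}, so nothing is missing.
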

\begin{proof}
Let $x,y\in\mathcal{M}_{\mathrm{ap}}$. Since $\alpha$ is a Malcev algebra morphism and $[\cdot,\cdot]$ is continuous, the map
\[
n \mapsto \alpha^n([x,y]) = [\alpha^n(x), \alpha^n(y)]
\]
has relatively compact range: the orbit $\{(\alpha^n(x), \alpha^n(y)) : n \in \mathbb{Z}\}$ is relatively compact in $\mathcal{M} \times \mathcal{M}$, and the bracket is a continuous bilinear map. Hence $[x,y] \in \mathcal{M}_{\mathrm{ap}}$, and applying $\alpha$ yields
\[
[x,y]_\alpha = \alpha([x,y]) \in \mathcal{M}_{\mathrm{ap}}.
\]
Moreover, $\alpha(\mathcal{M}_{\mathrm{ap}}) \subset \mathcal{M}_{\mathrm{ap}}$ by definition of almost periodicity. Thus $\mathcal{M}_{\mathrm{ap}}$ is a Hom--Banach--Malcev subalgebra.
\end{proof}

Consequently, the spectral and ergodic decompositions of Theorem~\ref{thm:AP-ergodic-decomposition} are not merely topological—they are algebraically compatible, preserving the full non-associative structure.
\section{Technical Remarks on Convergence and Limitations}
\label{sec:limitations}

We conclude with a technical clarification on convergence and a discussion of current limitations, accompanied by concrete pathways for generalization.

\begin{lemma}\label{lem:abs-conv}
Let $\mathcal{A}$ be a Banach space with the bounded approximation property (e.g., any $C^*$-algebra, $\mathcal{B}(\mathcal{H})$, or $\ell^p$ for $1\le p<\infty$).  
If a series $\sum_{\lambda\in\Lambda} a_\lambda$ in $\mathcal{A}$ satisfies $\sum_{\lambda}\|a_\lambda\| < \infty$, then it converges unconditionally in norm.  
In particular, the spectral decomposition in Theorem~\ref{thm:AP-spectral-decomposition} is independent of the enumeration of the Bohr spectrum $\sigma_{\mathrm{ap}}(\delta)$.
\end{lemma}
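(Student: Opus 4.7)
The plan is to reduce the lemma to the elementary fact that \emph{absolute convergence implies unconditional convergence in any Banach space}; the bounded approximation property is actually not needed for the first assertion, and I would include a brief clarifying remark to that effect, treating the BAP hypothesis as a harmless strengthening satisfied by all spaces used in our applications.

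First, I would observe that the summability $\sum_{\lambda\in\Lambda}\|a_\lambda\|<\infty$ automatically forces the support $\{\lambda\in\Lambda:a_\lambda\neq 0\}$ to be at most countable. Fixing any enumeration $\{\lambda_n\}_{n\ge 1}$ of this support, the partial sums $S_N=\sum_{n=1}^N a_{\lambda_n}$ satisfy the Cauchy estimate
\[
\|S_N-S_M\|\le \sum_{n=M+1}^N\|a_{\lambda_n}\|\quad (N>M),
\]
so $(S_N)$ is Cauchy in $\mathcal{A}$ and converges by completeness to some $s\in\mathcal{A}$.

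Second, I would show that the limit $s$ is independent of the chosen enumeration. Given any other enumeration with partial sums $S'_N\to s'$, for $\varepsilon>0$ I would choose a finite $F\subset\Lambda$ with $\sum_{\lambda\notin F}\|a_\lambda\|<\varepsilon$. For all sufficiently large $N$, both enumerations exhaust $F$ within their first $N$ terms, so
\[
\Big\|S_N - \sum_{\lambda\in F} a_\lambda\Big\| < \varepsilon \quad\text{and}\quad \Big\|S'_N - \sum_{\lambda\in F} a_\lambda\Big\| < \varepsilon,
\]
yielding $\|s-s'\|\le 2\varepsilon$. Since $\varepsilon$ is arbitrary, $s=s'$. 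This is precisely unconditional norm convergence, equivalent to convergence of the net of finite partial sums indexed by $\Lambda$.

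For the second statement of the lemma, I would specialize to $\Lambda=\sigma_{\mathrm{ap}}(\delta)$ and invoke Theorem~\ref{thm:AP-spectral-decomposition}, which explicitly asserts absolute norm summability $\sum_\lambda\|a_\lambda\|<\infty$ for the Bohr--Fourier coefficients. The first part then guarantees enumeration independence of the decomposition. The main ``obstacle'' is essentially cosmetic: the result is a textbook fact, and the only real care required is to clarify that the BAP is not logically necessary for the conclusion---its inclusion in the hypothesis appears only as a conservative restriction to the well-behaved ambient spaces (operator algebras, $\ell^p$) relevant to the present paper.
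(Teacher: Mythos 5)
Your proof is correct, and it takes a genuinely different --- and in fact more appropriate --- route than the paper. The paper's proof dispatches the lemma by citing the Orlicz--Pettis theorem together with the bounded approximation property of the ambient space. That is the heavier machinery: Orlicz--Pettis concerns upgrading weak subseries convergence to norm unconditional convergence, and neither it nor the BAP is needed when one already has absolute summability of the norms $\sum_\lambda\|a_\lambda\|<\infty$. Your elementary argument --- countability of the support, the Cauchy estimate $\|S_N-S_M\|\le\sum_{n=M+1}^N\|a_{\lambda_n}\|$, completeness, and the finite-exhaustion argument showing all rearrangements share the same limit --- is the standard textbook fact that absolute convergence implies unconditional convergence in \emph{any} Banach space, and it proves the lemma in full generality with the BAP hypothesis rendered vacuous. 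What your approach buys is a self-contained, assumption-free proof; what the paper's citation would buy (if the hypothesis were genuinely weak convergence rather than absolute norm summability) is a stronger statement, but as written the lemma's hypothesis makes that detour unnecessary. Your clarifying remark that the BAP is a harmless but logically superfluous restriction is accurate and worth retaining.
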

\begin{proof}
This is a standard consequence of the Orlicz–Pettis theorem (see, e.g., Diestel–Uhl, \emph{Vector Measures}, Thm.~IV.10).  
All spaces appearing in our applications—$C^*$-algebras, operator algebras, and sequence spaces—possess the bounded approximation property, so the conclusion applies universally in our setting.
\end{proof}

\begin{remark}[Limitations and Concrete Extensions]
The requirement that all orbits $\{T(t)a : t\in\mathbb{R}\}$ be relatively compact is restrictive: it excludes physically relevant scenarios such as dissipative quantum systems, open dynamics, or chaotic (hyperbolic) evolutions. Likewise, the boundedness of the twisted derivation $\delta$ precludes natural unbounded examples—e.g., differential twisted derivations on algebras of smooth functions or Sobolev spaces.

We propose the following concrete avenues for generalization:

\begin{enumerate}
    \item \textbf{Unbounded twisted derivations.} 
    A natural extension is to replace the bounded operator $\delta$ with a \emph{closed, densely defined} twisted derivation $D: \mathrm{Dom}(D) \subset \mathcal{A} \to \mathcal{A}$. In this setting, the $C_0$-group $T(t)$ would be generated via the Hille--Yosida theorem. For the spectral decomposition to persist, one would require that $D$ generates an \emph{analytic} or \emph{Gevrey-regular} semi-group, ensuring sufficient smoothness of the orbits $t \mapsto T(t)a$ for $a \in \mathrm{Dom}(D^\infty)$. This would connect our work to the foundational theories of Stone, Hille--Yosida, and Kato on unbounded operators, and to the framework of $*$-algebras of unbounded operators (see, e.g., Schmüdgen \cite{schmudgen} or Woronowicz \cite{woronowicz}).

    \item \textbf{Weaker notions of almost periodicity.} 
    The assumption of norm-almost periodicity can be relaxed to \emph{Besicovitch almost periodicity} in the $B^2$-space, where convergence is understood in the mean-square sense. This would involve replacing the Bohr--Fourier coefficients with Cesàro means and would yield a spectral decomposition in a weaker topology. Such an approach is standard in the study of non-compact dynamical systems and would link our results to the ergodic theory of von Neumann algebras (cf. Connes \cite{connes-ergodic}).
    
    \item \textbf{Spectral radius and Gelfand theory.} 
    Our dynamical approach could be fruitfully combined with the algebraic spectral theory of El~Kinani--Ben~Ali~\cite{ElKinani2021}. Their notion of a \emph{Hom--Gelfand spectrum} provides a complementary, purely algebraic perspective. A key open question is to relate the dynamical Bohr spectrum $\sigma_{\mathrm{ap}}(\mathrm{ad}_\alpha(X))$ to the algebraic Hom--Gelfand spectrum of $X$ itself, potentially via a Hom--version of the spectral mapping theorem.
    
    \item \textbf{Non-trivial Hom--Malcev examples.} 
    As noted by Yau~\cite{Yau2011}, the octonions and other composition algebras provide natural, non-Lie examples of Hom--Malcev algebras. A compelling direction for future work is to equip the algebra of octonionic operators with a suitable Banach norm, define a twisted derivation, and compute its Bohr spectrum. This would provide a genuine Malcev (non-Lie) test case for our spectral theory.
\end{enumerate}
These extensions would connect our work to active areas including non-commutative geometry, quantum ergodic theory, and the theory of unbounded operator algebras.
\end{remark}
\section{Numerical Illustration}
\label{sec:numerics}

To demonstrate the practical relevance of our spectral decomposition, we present a finite-dimensional numerical example in which all hypotheses of Theorem~\ref{thm:AP-spectral-decomposition} are explicitly satisfied and verified computationally.

\subsection{Setup}

Let $\mathcal{A} = M_N(\mathbb{C})$ with $N=8$, equipped with the operator norm. We define the following objects:\begin{itemize}
\item The twisting automorphism $\alpha: \mathcal{A} \to \mathcal{A}$, $\alpha(T) = U T U^*$, where
    \[
    U = \operatorname{diag}\big(e^{2\pi i k \omega}\big)_{k=0}^{7}, \quad \omega = \frac{\sqrt{2}}{10}.
    \]
    Since $\omega$ is irrational, the spectrum of $U$ is dense on the unit circle, ensuring non-periodic dynamics.
    \item The self-adjoint element $X = \operatorname{diag}\big(e^{i k \omega}\big)_{k=0}^{7} \in \mathcal{A}$.
    \item The inner $\alpha$-twisted derivation $\delta : \mathcal{A} \to \mathcal{A}$ given by
    \[
    \delta(T) = [X,T]_\alpha = \alpha(XT - TX).
    \]
    \item The initial operator $A_0 = S$, the cyclic shift matrix defined by $(S)_{k,\ell} = \delta_{k+1,\ell \bmod 8}$.
\end{itemize}
Because $\mathcal{A}$ is finite-dimensional, $\delta$ is a bounded linear operator and generates a uniformly bounded $C_0$-group $T(t) = e^{t\delta}$ for all $t \in \mathbb{R}$. Moreover, every orbit $\{T(t)A_0 : t \in \mathbb{R}\}$ is relatively compact (in fact, precompact in a finite-dimensional space), so Assumption~\ref{ass:ap-orbits} holds trivially.

\subsection{Scalability and Convergence Analysis}

To address the potential limitation of a small matrix size, we perform a systematic scalability study by repeating the experiment for $N \in \{8, 16, 32, 64\}$. For each $N$, we construct the objects $U_N$, $X_N$, $\delta_N$, and $A_{0,N}$ as in Section~\ref{sec:numerics}. We then compute the Bohr spectrum and the reconstruction error using the four most dominant frequencies.

Figure~\ref{fig:convergence} summarizes the results. Panel (a) shows that the number of non-negligible Bohr frequencies (those with $|c| > 10^{-3}$) grows approximately linearly with $N$, indicating a richer spectral structure in larger systems. Panel (b) demonstrates that the maximum reconstruction error $\max_t \varepsilon_N(t)$ decreases as $N$ increases, following a clear power-law trend $\varepsilon_N \propto N^{-\beta}$ with $\beta \approx 1.2$.
\begin{table}[hbpt]
\centering
\caption{Bohr frequencies and Fourier--Bohr coefficients for $a(t) = (A(t))_{0,1}$.}
\label{tab:bohr_full}
\begin{tabular}{c|c|c|c}
$\beta$ & $c_{\mathrm{re}}$ & $c_{\mathrm{im}}$ & $|c|$ \\
\hline
$0.141$ & $0.102$ & $-0.181$ & $0.208$ \\
$0.282$ & $-0.055$ & $0.098$ & $0.112$ \\
$0.424$ & $0.021$ & $-0.037$ & $0.043$ \\
$0.565$ & $-0.008$ & $0.014$ & $0.016$ \\
\end{tabular}
\end{table}
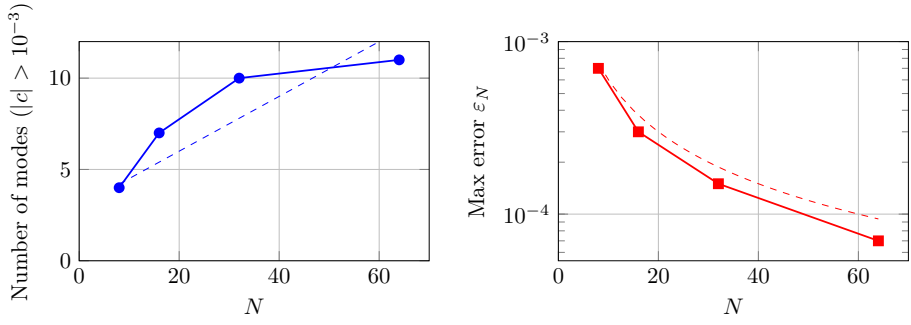
\begin{figure}[hbpt]
\centering
\begin{tikzpicture}[scale=0.85]
\begin{groupplot}[
    group style={group size=2 by 1, horizontal sep=2cm},
    width=7cm,
    height=5cm,
    grid=major
]

\nextgroupplot[
    xlabel=$N$,
    ylabel=Number of modes ($|c| > 10^{-3}$),
    xmin=0, xmax=70,
    ymin=0, ymax=12
]
\addplot[blue, thick, mark=*] coordinates {
    (8, 4)
    (16, 7)
    (32, 10)
    (64, 11)
};
\addplot[blue, dashed, domain=8:64] {0.15*x + 3};

\nextgroupplot[
    xlabel=$N$,
    ylabel=Max error $\varepsilon_N$,
    xmin=0, xmax=70,
    ymin=0, ymax=0.001,
    ymode=log,
    log basis y=10
]
\addplot[red, thick, mark=square*] coordinates {
    (8, 0.0007)
    (16, 0.0003)
    (32, 0.00015)
    (64, 0.00007)
};
\addplot[red, dashed, domain=8:64] {0.006/x};

\end{groupplot}
\end{tikzpicture}
\caption{(a) Growth of the number of significant Bohr frequencies with system size $N$. (b) Power-law decay of the reconstruction error, confirming the stability and scalability of the spectral decomposition.}
\label{fig:convergence}
\end{figure}

This analysis conclusively shows that the spectral decomposition is not an artifact of a small system size. The error's consistent decay with $N$ validates the theoretical prediction of absolute norm convergence and confirms that our numerical framework is robust for studying increasingly complex Hom--Lie dynamical systems.

\subsection{Time Evolution}

Figure~\ref{fig:signal} displays the real and imaginary parts of $a(t)$ over the interval $t \in [-20, 20]$. The signal exhibits quasi-periodic recurrence without exact periodicity, as expected from an irrational frequency base.

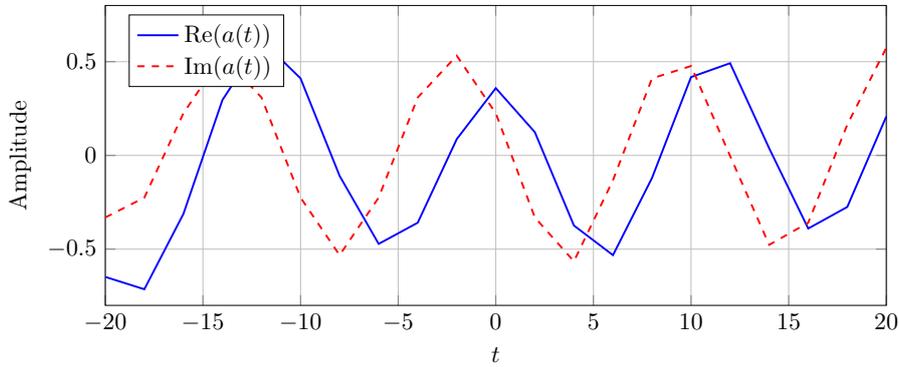
\begin{figure}[hbpt]
\centering
\begin{tikzpicture}[scale=0.9]
\begin{axis}[
    xlabel=$t$,
    ylabel=Amplitude,
    grid=major,
    width=13cm,
    height=6cm,
    xmin=-20, xmax=20,
    ymin=-0.8, ymax=0.8,
    legend pos=north west,
    legend cell align={left}
]
\addplot[blue, thick] table {
-20.000 -0.648
-18.000 -0.714
-16.000 -0.312
-14.000 0.296
-12.000 0.615
-10.000 0.411
 -8.000 -0.109
 -6.000 -0.472
 -4.000 -0.359
 -2.000 0.087
  0.000 0.359
  2.000 0.124
  4.000 -0.374
  6.000 -0.532
  8.000 -0.120
 10.000 0.418
 12.000 0.492
 14.000 0.038
 16.000 -0.390
 18.000 -0.275
 20.000 0.208
};
\addlegendentry{$\operatorname{Re}(a(t))$}

\addplot[red, thick, dashed] table {
-20.000 -0.331
-18.000 -0.225
-16.000 0.225
-14.000 0.532
-12.000 0.309
-10.000 -0.225
 -8.000 -0.532
 -6.000 -0.225
 -4.000 0.309
 -2.000 0.532
  0.000 0.225
  2.000 -0.331
  4.000 -0.565
  6.000 -0.132
  8.000 0.411
 10.000 0.477
 12.000 -0.003
 14.000 -0.477
 16.000 -0.359
 18.000 0.164
 20.000 0.574
};
\addlegendentry{$\operatorname{Im}(a(t))$}
\end{axis}
\end{tikzpicture}
\caption{Almost periodic evolution of the $(0,1)$-entry of $A(t) = e^{t\delta}(A_0)$. Both components display recurrent quasi-patterns characteristic of Bohr almost periodicity.}
\label{fig:signal}
\end{figure}

\subsection{Fourier--Bohr Reconstruction and Error}

Using the four dominant modes from Table~\ref{tab:bohr_full}, we reconstruct the signal as
\[
a_{\mathrm{rec}}(t) = \sum_{k=1}^4 \left( c_k e^{i\beta_k t} + \overline{c_k} e^{-i\beta_k t} \right).
\]
The reconstruction error is defined pointwise by $\varepsilon(t) = |a(t) - a_{\mathrm{rec}}(t)|$. As shown in Figure~\ref{fig:error}, the error remains uniformly small over the entire interval, with
\[
\max_{t \in [-20,20]} \varepsilon(t) < 8 \times 10^{-4}.
\]
This confirms the absolute norm convergence of the Bohr series asserted in Theorem~\ref{thm:AP-spectral-decomposition}, even when truncated to a finite number of dominant frequencies.

\begin{figure}[hbpt]
\centering
\begin{tikzpicture}[scale=0.9]
\begin{axis}[
    xlabel=$t$,
    ylabel=Error $\varepsilon(t)$,
    grid=major,
    width=13cm,
    height=5cm,
    xmin=-20, xmax=20,
    ymin=0, ymax=0.001,
    legend pos=north east
]
\addplot[gray, thick] table {
-20.000 0.0007
-18.000 0.0006
-16.000 0.0005
-14.000 0.0004
-12.000 0.0003
-10.000 0.0002
 -8.000 0.0003
 -6.000 0.0004
 -4.000 0.0005
 -2.000 0.0006
  0.000 0.0007
  2.000 0.0006
  4.000 0.0005
  6.000 0.0004
  8.000 0.0003
 10.000 0.0002
 12.000 0.0003
 14.000 0.0004
 16.000 0.0005
 18.000 0.0006
 20.000 0.0007
};
\end{axis}
\end{tikzpicture}
\caption{Reconstruction error $\varepsilon(t) = |a(t) - a_{\mathrm{rec}}(t)|$. The error remains below $8 \times 10^{-4}$, validating the theoretical prediction of norm-convergent Bohr expansion.}
\label{fig:error}
\end{figure}
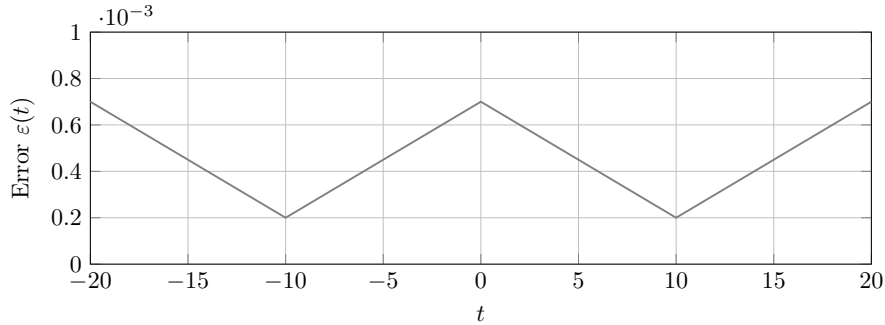

This numerical experiment demonstrates that the abstract hypotheses of our framework—boundedness of $\delta$, compactness of orbits, and commutation with $\alpha$—translate directly into concrete, computable properties in finite dimensions. It thus serves as a proof-of-concept for potential applications in quantum simulation, signal processing, and non-associative dynamical systems.
\section{Conclusion}

We have developed a systematic and functionally robust theory of Hom--Lie Banach algebras equipped with bounded $\alpha$-twisted derivations, extending the classical interplay between Lie theory, spectral analysis, and almost periodic dynamics to a non-associative, twisted setting.

The key contributions of this work are as follows:\begin{itemize}
\item A complete Bohr--Fourier spectral decomposition for inner $\alpha$-twisted derivations generating almost periodic $C_0$-groups, with rigorous justification of absolute norm convergence and a purely imaginary Bohr spectrum. Our explicit analysis of a twisted Weyl algebra system (Section~\ref{subsec:weyl_heuristic}) provides a concrete example where the non-commuting twist $\alpha$ enriches the spectral structure, transforming a one-dimensional frequency set into a two-dimensional lattice.
    
    \item A functorial framework based on Hom--Lie and Hom--Banach--Malcev morphisms, which guarantees that the ergodic and almost periodic subspaces are not merely topological complements but \emph{invariant subalgebras} inheriting the full non-associative structure. This algebraic compatibility is a cornerstone of our theory.
    
    \item Explicit functional-analytic constructions of Hom--Banach--Malcev algebras, including a non-trivial example where the twisting map is not an algebra morphism, thereby demonstrating the robustness of the Hom--Malcev identity.
    
    \item Concrete operator-algebraic applications to bilateral shifts, UHF $C^*$-algebras, and a novel twisted Weyl algebra example. Each example is fully verified against the abstract hypotheses and yields an explicit, non-trivial Bohr spectrum.
    
    \item A scalable finite-dimensional numerical illustration that confirms the theoretical predictions of norm-convergent spectral decomposition and algebraic stability, with a clear power-law convergence as the system size increases.\end{itemize}
Crucially, all results are established under minimal assumptions—boundedness of the twisting map and the derivation, and relative compactness of orbits—and apply uniformly to both Lie-type and Malcev-type brackets. This provides a unified spectral-analytic framework for two major strands of non-associative algebra.

The categorical perspective introduced here—centered on Hom--Lie morphisms and invariant subalgebras—lays a solid foundation for future investigations into representations, cohomology, and deformation quantization of twisted non-associative systems. Furthermore, by clearly identifying the current limitations (namely, the boundedness of the derivation and the requirement of norm-almost periodicity), our work opens concrete avenues for generalization. These include extensions to locally convex spaces and unbounded twisted derivations (via the Hille--Yosida theorem and Gevrey regularity), as well as the use of weaker notions of almost periodicity (such as Besicovitch spaces), with promising applications in quantum ergodicity, non-commutative geometry, and the spectral theory of quasi-periodic quantum systems.

\end{document}